\begin{document}

\author{Pierre-Lo\"ic M\'eliot}
\title{Asymptotics of the Gelfand models of the symmetric groups}
\address{The Gaspard--Monge Institut of electronic and computer science,
University of Marne-La-Vall\'ee Paris-Est,
77454 Marne-la-Vall\'ee Cedex 2, France}
\email{meliot@phare.normalesup.org}

\newcommand{\Z}{\mathbb{Z}}     
\newcommand{\N}{\mathbb{N}}    
\newcommand{\R}{\mathbb{R}}  
\newcommand{\Q}{\mathbb{Q}}    
\newcommand{\C}{\mathbb{C}}    
\newcommand{\I}{\mathrm{i}}
\newcommand{\E}{\mathrm{e}}
\newcommand{\sym}{\mathfrak{S}}
\newcommand{\Comp}{\mathfrak{C}}   
\newcommand{\Part}{\mathfrak{P}}      
\newcommand{\proba}{\mathbb{P}}     
\newcommand{\esper}{\mathbb{E}}     
\newcommand{\card}{\mathrm{card}}  
\newcommand{\obs}{\mathscr{O}}
\newcommand{\GL}{\mathrm{GL}}
\newcommand{\Gel}{\mathbb{G}}
\newcommand{\For}{\mathbb{F}}
\newcommand{\wt}{\mathrm{wt}}
\newcommand{\eps}{\varepsilon}
\newcommand{\tilh}{\widetilde{h}}
\newcommand{\tilp}{\widetilde{p}}
\newcommand{\tilq}{\widetilde{q}}
\newcommand{\tr}{\mathrm{tr}}
\newcommand{\id}{\mathrm{id}}
\newcommand{\lle}{\left[\!\left[}              
\newcommand{\rre}{\right]\!\right]}    
\newcommand{\scal}[2]{\left\langle #1\vphantom{#2}\,\right |\left.#2 \vphantom{#1}\right\rangle}   
\newcommand{\comment}[1]{}
\newtheorem{theorem}{Theorem}
\newtheorem{definition}[theorem]{Definition}
\newtheorem{proposition}[theorem]{Proposition}
\newtheorem{lemma}[theorem]{Lemma}
\newtheorem{corollary}[theorem]{Corollary}
\theoremstyle{remark}
\newtheorem*{example}{Example}
\newtheorem*{examples}{Examples}
\newcommand{\figcap}[2]{\begin{figure}[ht] \begin{center} 
							{\footnotesize{#1}} 
							\caption{#2} \end{center} \end{figure}}

\begin{abstract}
If a partition $\lambda$ of size $n$ is chosen randomly according to the Plancherel measure $\proba_{n}[\lambda]=(\dim \lambda)^{2}/n!$, then as $n$ goes to infinity, the rescaled shape of $\lambda$ is with high probability very close to a non-random continuous curve $\Omega$ known as the Logan-Shepp-Kerov-Vershik curve (\cite{LS77}, \cite{KV77}). Moreover, the rescaled deviation of $\lambda$ from this limit shape can be described by an explicit generalized gaussian process (see \cite{Ker93}). In this paper, we investigate the analoguous problem when $\lambda$ is chosen with probability proportional to $\dim \lambda$ instead of $(\dim \lambda)^{2}$. We shall use very general arguments due to Ivanov and Olshanski for the first and second order asymptotics (\emph{cf.} \cite{IO02}); these arguments amount essentially to a method of moments in a noncommutative setting. The first order asymptotics of the Gelfand measures turns out to be the same as for the Plancherel measure; on the contrary, the fluctuations are different (and bigger), although they involve the same generalized gaussian process. Many of our computations relie on the enumeration of involutions and square roots in $\sym_{n}$.
\end{abstract}
\maketitle

\section{Gelfand models and associated ensembles of random partitions}

\subsection{Plancherel measure of a complex linear representation of a finite group}
Consider \emph{any} finite group $G$, and \emph{any} (complex, linear, finite-dimensional) representation $V$. It is well-known that $V$ can be written in a unique way as a direct sum of irreducible representations:
$$V=\bigoplus_{\lambda \in \widehat{G}} n_{\lambda}\,V^{\lambda}$$
where $\widehat{G}$ is the set of isomorphism classes of irreducible representations of $G$, and the $n_{\lambda}$'s are non-negative integers. The \textbf{Plancherel measure} of the representation $V$ is the probability measure on $\widehat{G}$ corresponding to this decomposition:
$$\proba_{V}[\lambda \in \widehat{G}]=\frac{n_{\lambda}\,\dim V^{\lambda}}{\dim V}$$
In the special case when $V=\C G$ is the regular representation, $n_{\lambda}=\dim V^{\lambda}$, and $\proba_{\C G}$ is simply called the Plancherel measure of the group. That said, the asymptotic representation theory of a family of groups $(G_{n})_{n \in \N}$ is the set of results related to the following problem: given a ``natural'' family of representations $(V_{n})_{n\in \N}$, what is the asymptotic distribution of $\lambda \in \widehat{G}_{n}$ under the Plancherel measure $\proba_{V_{n}}$? Is it possible to enounce a law of large numbers, and a central limit theorem in this setting?\bigskip

\subsection{Classical ensembles of random partitions coming from representation theory}
When $G=\sym_{n}$ is the symmetric group of order $n$, the irreducible representations are labelled by partitions of size $n$, \emph{i.e.}, non-increasing sequences $\lambda=(\lambda_{1},\ldots,\lambda_{r})$ of positive integers that sum up to $n$. We denote by $\Part_{n}$ the set of partitions of size $n$; it follows from the previous paragraph that any representation of $\sym_{n}$ yields a probability measure on $\Part_{n}$. 
\figcap{$$ \yng(1,3,4,4,6,7)$$\vspace{-5mm}}{The Young diagram of the partition $\lambda=(7,6,4,4,3,1)$ of size $n=25$.\label{youngdiagram}}
Partitions are usually represented by their Young diagrams, see figure \ref{youngdiagram}; \clearpage

\noindent hence, a representation of $\sym_{n}$ corresponds to a random ensemble of such planar objects. Let us describe briefly some ``classical'' ensembles of random partitions constructed in this way:\vspace{2mm}
\begin{enumerate}
\item \textbf{Plancherel measures}. When $G=\sym_{n}$ and $V=\C\sym_{n}$, the probability law $\proba_{V}=\proba_{n}$ is the Plancherel measure of the symmetric group $\sym_{n}$, and its asymptotics have been extensively studied (see in particular \cite{IO02}), in connection with Ulam's problem of longest increasing subsequences in random permutations (\cite{BDJ99}, \cite{BDJ00}), random matrix theory (\cite{BOO00}, \cite{Oko00}), free probability (\cite{Bia98}), random tilings and random surfaces (\cite{OR03}), hydrodynamic partial differential equations (\cite{Ker99}) and Gromov-Witten theory (\cite{Oko03}). The \textbf{Logan-Shepp-Kerov-Vershik law of large numbers} ensures that if the diagram of a random partition of size $n$ is rotated $45^{o}$ and drawn with square boxes of area $2/n$, then as $n$ goes to infinity, the upper boundary $s\mapsto \lambda^{*}(s)$ of this shape converges in probability towards the continuous curve
$$\Omega(s)=\begin{cases}
\frac{2}{\pi}\left(s\,\arcsin \frac{s}{2}+\sqrt{4-s^{2}}\right)&\text{if }|s|<2,\\
|s|&\text{if }|s|\geq2.
\end{cases}$$
see figure \ref{lskv}. Moreover, the scaled deviation $\Delta_{n,\proba}(s)=\sqrt{n}(\lambda^{*}(s)-\Omega(s))$ converges in law in $[-2,2]$ towards the generalized gaussian process
$$\Delta_{\infty,\proba}(s)=\Delta_{\infty,\proba}(2\cos\theta)=\frac{2}{\pi}\sum_{k=2}^{\infty}\frac{\zeta_{k}}{\sqrt{k}}\,\sin k\theta,$$
where the $\zeta_{k}$'s are independent normal variables of variance $1$, and the infinite sum makes sense as a distribution. This latter result is \textbf{Kerov's central limit theorem}.

\figcap{\includegraphics{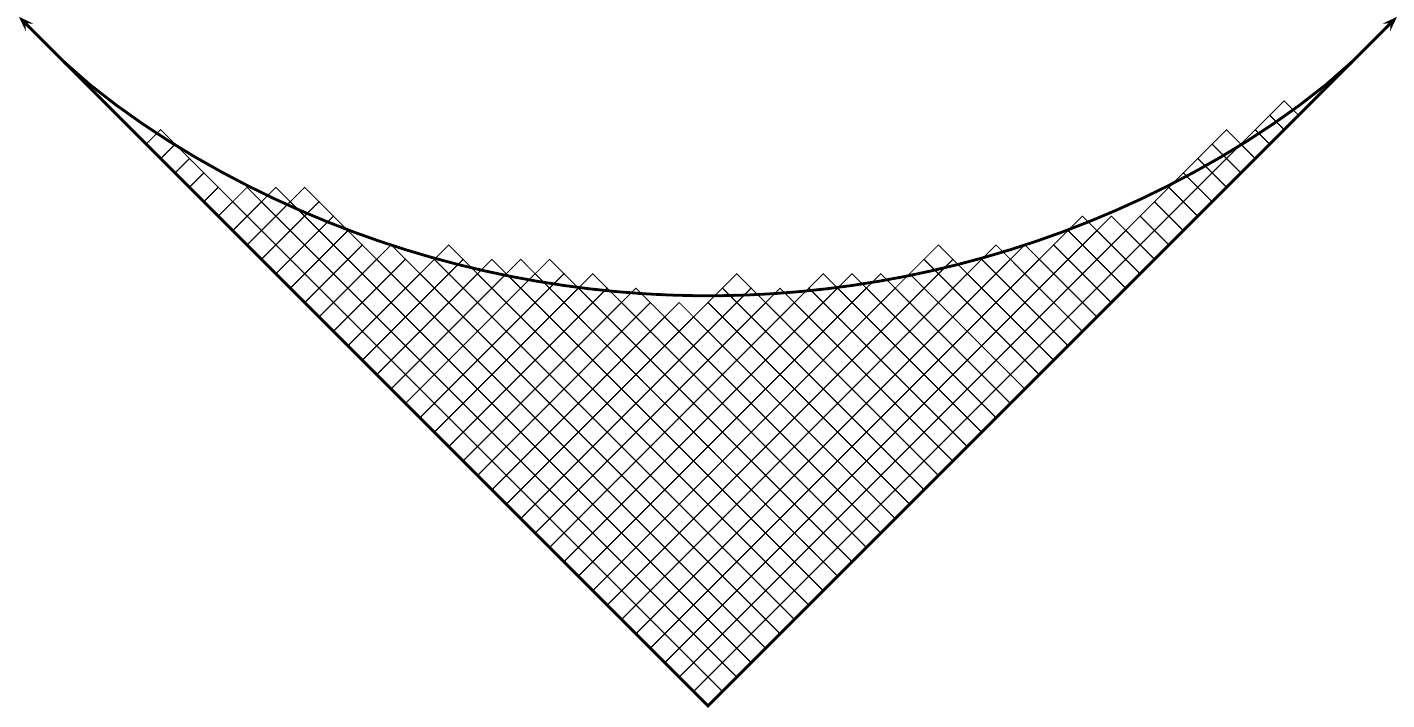}}{
Young diagram of size $n=500$ picked randomly according to the Plancherel measure of the symmetric group.\label{lskv}}

\item \textbf{Schur-Weyl measures}. When $G=\sym_{n}$ acts on $V=(\C^{N})^{\otimes n}$ by permutation of the letters in the simple tensors, one obtains the so-called Schur-Weyl measure $SW_{n,N}$. Notice that by Schur-Weyl duality, one may also consider this measure as a measure on irreducible polynomial representations of $\GL(N,\C)$; in particular, $SW_{n,N}$ charges only the partitions with less than $N$ parts. Let us suppose that $n^{\alpha}\simeq cN$ with $c>0$. Then, one has three asymptotic regimes according to the value of $\alpha$. If $\alpha>1/2$, the random partitions have the same asymptotics as in the Plancherel case (LSKV law of large numbers and Kerov's central limit theorem). When $\alpha=1/2$, the first order asymptotics have been studied by P. Biane in \cite{Bia01}. The limit shapes $\Omega_{c}$ are deformations of $\Omega$, see figure \ref{schurweyl}; the case $c=0$ corresponds to the LSKV curve. The support of $\Omega_{c}(s)-|s|$ is $[c-2,c+2]$ when $c<1$, and $[-1/c,c+2]$ when $c\geq 1$. More recently, we found out that Kerov's central limit theorem also holds in the case of Schur-Weyl measures of parameter $\alpha=1/2$ in the interval $[c-2,c+2]$, see \cite{Mel10b}.
\figcap{\includegraphics{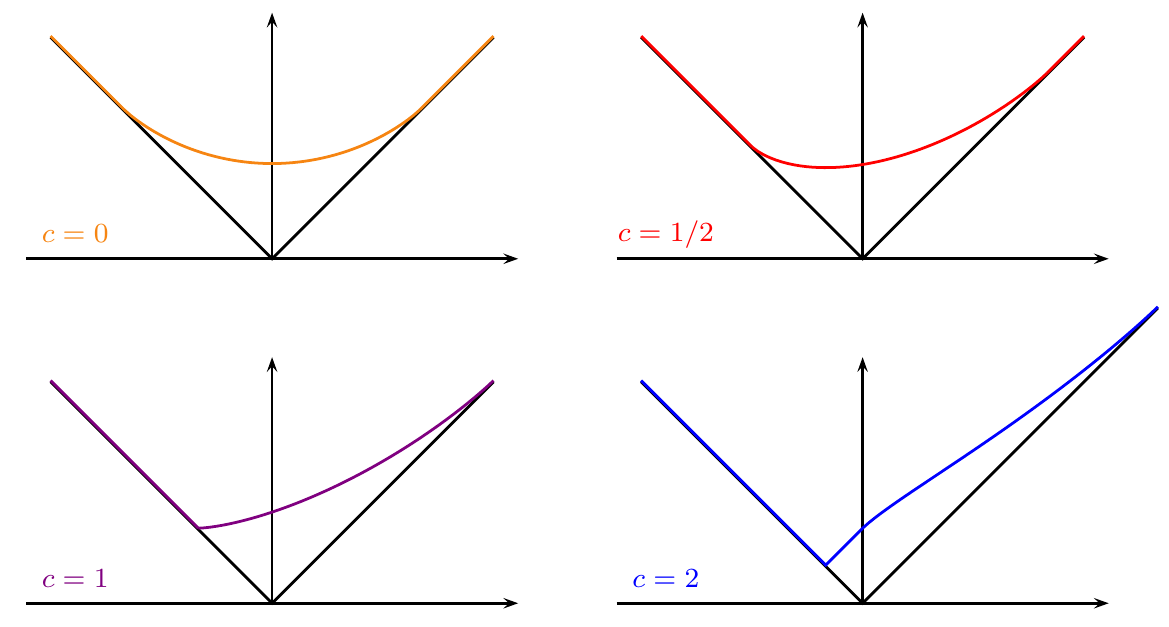}}{Limit shapes $\Omega_{c}$ of the rescaled Young diagrams under the Schur-Weyl measures of parameter $\alpha\geq 1/2$.\label{schurweyl}}
Finally, when $\alpha<1/2$, the lines of the random partitions have typical order of magnitude $n^{1-\alpha}$, and the columns of the random partitions have typical order of magnitude $n^{\alpha}$; and when rescaled according to these orders, the shapes of partitions are concentrated around a rectangle of edges $c$ and $c^{-1}$, see figure \ref{rectangle}.
\figcap{\includegraphics{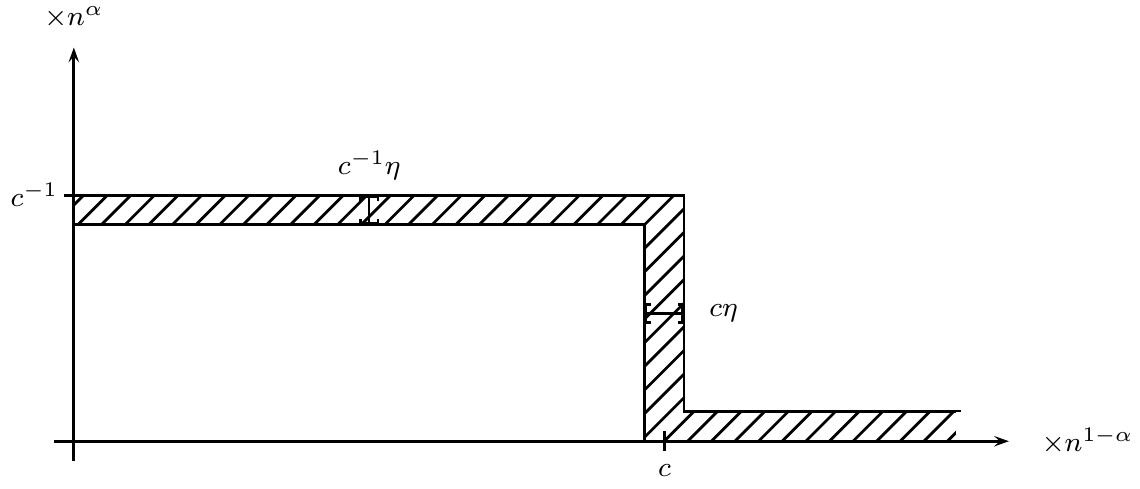}}{Limit shapes of the rescaled Young diagrams under the Schur-Weyl measures of parameter $\alpha<1/2$. When $n$ goes to infinity, with high probability, the rescaled diagram is in the hatched area.\label{rectangle}}

\item \textbf{q-Plancherel measures}. When $G=\GL(n,\For_{q})$ and $V=\C[\GL(n,\For_{q})/\mathrm{B}(n,\For_{q})]=\C[\mathcal{F}(n,\For_{q})]$ is the module of functions on the flag variety of $(\For_{q})^{n}$, one obtains the $q$-Plancherel measure $M_{n,q}$, that has been studied in \cite{FM10} and \cite{Mel10a}. By Iwahori theory, one may also consider $M_{n,q}$ as a measure on irreducible representations of the Hecke algebra $\mathscr{H}_{q}(\sym_{n})$, that is to say, as a measure on partitions. Then, the $i$-th column of a random partition under the law $M_{n,q}$ is asymptotically equal to $(q-1)q^{-i}\,n$ (see figure \ref{qyoung}), and the deviations from these expected values are described by an explicit gaussian process.
\figcap{\includegraphics{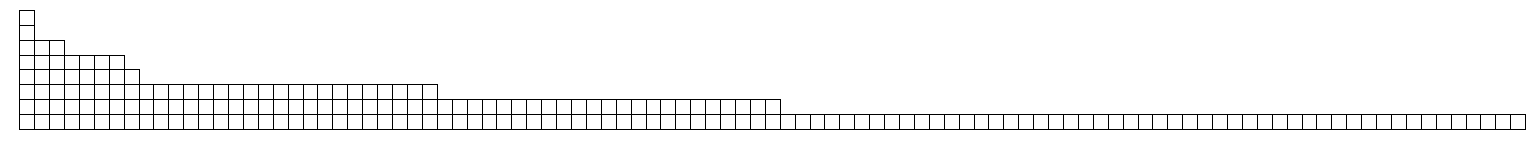}}{Conjugate of a Young diagram of size $n=200$ picked randomly according to the $q$-Plancherel measure of parameter $q=2$.\label{qyoung}}
\end{enumerate}
All these asymptotic results can be obtained by using a noncommutative method of moments, that is essentially due to Kerov and Olshanski; we shall recall this in section \ref{obs}. The purpose of this paper is to give yet another example of ensembles of random partitions that can be studied using these elementary methods. These ensembles come from the so-called \textbf{Gelfand models} of the symmetric groups, and from a certain viewpoint, they are the simplest ensembles that one can think of in the setting of representation theory, because they correspond to the case when all $n_{\lambda}$'s are equal to $1$.\bigskip\bigskip

\subsection{The Gelfand model of the symmetric group and its trace} If $G$ is a finite group, a Gelfand model for $G$ is a representation $M$ whose decomposition in irreducibles is $M=\bigoplus_{\lambda \in \widehat{G}} V_{\lambda}$; in other words, any irreducible representation occurs with multiplicity $1$. In the case of the symmetric group, a combinatorial description of such a representation has been proposed by Adin, Postnikov and Roichman in \cite{APR07}.  In the following, $\varsigma^{\lambda}$ denotes the \emph{non normalized} irreducible character of type $\lambda\in \Part_{n}$. We also denote by $\mathfrak{I}_{n}$ the set of \textbf{involutions} in $\sym_{n}$, and by $\C\mathfrak{I}_{n}$ the vector space of formal sums of involutions; its basis elements shall be denoted by $B_{v}$ if $v \in \mathfrak{I}_{n}$. Recall that the symmetric group $\sym_{n}$ is a Coxeter group with generators $s_{i}=(i,i+1)$, $i \in \lle1,n\rre$. The \textbf{descents} of a permutation $\sigma$ are the indices $i$ such that $\ell(\sigma \,s_{i})<\ell(\sigma)$; we denote by $D(\sigma)$ their set. Then, one can define a representation $\rho : \sym_{n}\mapsto \GL(\C\mathfrak{I}_{n})$ by:
$$s_{i}\cdot B_{v} = \begin{cases}
- B_{v}&\text{if }s_{i}vs_{i}=v \text{ and }i\in D(v),  \\
B_{s_{i}vs_{i}}&\text{otherwise}. 
\end{cases}$$
It is a theorem that $(\rho,\C\mathfrak{I}_{n})$ yields a Gelfand model for $\sym_{n}$. It actually comes from the following property, see Theorem 3.1 in \cite{APR07}:
\begin{proposition}[Trace of the Gelfand model of a $\R$-split group]\label{tracegelfand}
Let $G$ be a finite group such that $\Z G$ is split over $\R$. For any $g \in G$,
$$\sum_{\lambda \in \widehat{G}} \varsigma^{\lambda}(g)=\card\{h \in G\,\,|\,\,h^{2}=g\}.$$
\end{proposition}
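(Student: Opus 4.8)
The plan is to recognise the right-hand side as a class function on $G$ and to expand it in the orthonormal basis of irreducible characters; the coefficients that appear will be exactly the Frobenius--Schur indicators, and the splitting hypothesis will force each of them to equal $1$. Set $r_{2}(g)=\card\{h\in G \,|\, h^{2}=g\}$. Since $(xhx^{-1})^{2}=x\,h^{2}\,x^{-1}$, the squaring map intertwines conjugation, so $h\mapsto xhx^{-1}$ is a bijection between the square roots of $g$ and those of $xgx^{-1}$; hence $r_{2}$ is a class function. Because the characters $\varsigma^{\lambda}$, $\lambda\in\widehat{G}$, form an orthonormal basis of the space of class functions for the Hermitian product $\scal{f}{f'}=\frac{1}{|G|}\sum_{g\in G}f(g)\,\overline{f'(g)}$, it suffices to compute the coefficients $c_{\lambda}=\scal{r_{2}}{\varsigma^{\lambda}}$ and to show that $c_{\lambda}=1$ for every $\lambda$.

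First I would compute these coefficients by interchanging the order of summation:
$$c_{\lambda}=\frac{1}{|G|}\sum_{g\in G} r_{2}(g)\,\overline{\varsigma^{\lambda}(g)}=\frac{1}{|G|}\sum_{h\in G}\overline{\varsigma^{\lambda}(h^{2})}=\overline{\nu(\lambda)},\qquad \nu(\lambda):=\frac{1}{|G|}\sum_{h\in G}\varsigma^{\lambda}(h^{2}).$$
Thus $c_{\lambda}=\nu(\lambda)$ is the Frobenius--Schur indicator of $V^{\lambda}$ (it will turn out to be real). This already yields the classical Frobenius--Schur identity $r_{2}=\sum_{\lambda}\nu(\lambda)\,\varsigma^{\lambda}$, valid for an arbitrary finite group, and reduces the proposition to showing that $\nu(\lambda)=1$ for all $\lambda$ under the hypothesis.

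To evaluate $\nu(\lambda)$ I would use the identity $\varsigma^{\lambda}(h^{2})=\chi_{\mathrm{Sym}^{2}V^{\lambda}}(h)-\chi_{\wedge^{2}V^{\lambda}}(h)$, obtained by squaring the eigenvalues of $\rho^{\lambda}(h)$. Averaging over $h$ gives
$$\nu(\lambda)=\dim\big(\mathrm{Sym}^{2}V^{\lambda}\big)^{G}-\dim\big(\wedge^{2}V^{\lambda}\big)^{G}.$$
Since $\mathrm{Sym}^{2}V^{\lambda}\oplus\wedge^{2}V^{\lambda}=V^{\lambda}\otimes V^{\lambda}$ and $V^{\lambda}$ is irreducible, Schur's lemma gives $\dim(V^{\lambda}\otimes V^{\lambda})^{G}=\scal{\varsigma^{\lambda}}{\overline{\varsigma^{\lambda}}}\in\{0,1\}$, equal to $1$ exactly when $V^{\lambda}$ is self-dual. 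Hence the two nonnegative integers above sum to at most $1$, and $\nu(\lambda)\in\{-1,0,1\}$.

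Finally, the crucial step---and the one I expect to be the main obstacle---is to extract $\nu(\lambda)=1$ from the hypothesis that $\Z G$ splits over $\R$. This hypothesis means precisely that every $V^{\lambda}$ is realisable over $\R$, say $V^{\lambda}=W^{\lambda}\otimes_{\R}\C$ for a real $G$-module $W^{\lambda}$. Averaging an arbitrary inner product on $W^{\lambda}$ over $G$ produces a $G$-invariant, symmetric, positive-definite form on $W^{\lambda}$, whose complexification is a nonzero $G$-invariant nondegenerate symmetric bilinear form on $V^{\lambda}$; nondegeneracy gives $V^{\lambda}\cong(V^{\lambda})^{*}$, so this form yields a nonzero invariant in $\mathrm{Sym}^{2}V^{\lambda}$ and $\dim(\mathrm{Sym}^{2}V^{\lambda})^{G}\geq 1$. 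Combined with the bound $\dim(\mathrm{Sym}^{2}V^{\lambda})^{G}+\dim(\wedge^{2}V^{\lambda})^{G}\leq 1$ established above, this forces $\dim(\mathrm{Sym}^{2}V^{\lambda})^{G}=1$ and $\dim(\wedge^{2}V^{\lambda})^{G}=0$, hence $\nu(\lambda)=1$. Substituting into $r_{2}=\sum_{\lambda}\nu(\lambda)\,\varsigma^{\lambda}$ then gives the claimed formula. The delicate points are the (standard but nontrivial) translation of the abstract splitting condition into realisability over $\R$, i.e.\ all Schur indices equal to $1$, and the sign bookkeeping in the symmetric/antisymmetric decomposition; the remainder is routine character theory.
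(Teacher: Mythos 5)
Your proof is correct: it is the classical Frobenius--Schur counting argument, namely expanding the class function $g\mapsto\card\{h\,|\,h^{2}=g\}$ in the orthonormal basis of irreducible characters, identifying the coefficients as Frobenius--Schur indicators via the $\mathrm{Sym}^{2}/\wedge^{2}$ decomposition, and using the $\R$-splitting hypothesis (all irreducibles realisable over $\R$, hence carrying an invariant nondegenerate symmetric form) to force every indicator to equal $+1$. The paper does not prove this proposition itself but merely cites Theorem 3.1 of \cite{APR07}, and your argument is precisely the standard proof underlying that reference, so there is nothing to compare beyond noting that the two delicate points you flag (Schur indices and the sign bookkeeping) are handled correctly.
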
 \bigskip

This proposition applies to $\sym_{n}$, because any Coxeter group is defined over $\R$ (and even $\Q$). On the other hand, one can show that the aforementioned representation $\rho$ has indeed this character; as a consequence, $(\rho,\C\mathfrak{I}_{n})$ is indeed a Gelfand model of $\sym_{n}$. In particular, one sees that:
$$\sum_{\lambda \in \Part_{n}} \dim \lambda = \card\{\sigma \in \sym_{n}\,\,|\,\,\sigma^{2}=\id\}=\card \,\mathfrak{I}_{n}$$
This identity can be recovered if one notices that $\sigma \in \sym_{n}$ is an involution if and only if the two standard tableaux $P(\sigma)$ and $Q(\sigma)$ obtained from $\sigma$ by the RSK correspondence (\emph{cf.} \cite{Ful97}) are the same. Thus\footnote{The Gelfand model $(\rho,\C\mathfrak{I}_{n})$ of $\sym_{n}$ is well-behaved with respect to the RSK correspondence. Hence, for any involution $\sigma$ with conjugacy class $C$, the submodule $M(C)\subset \C\mathfrak{I}_{n}$ generated by $B_\sigma$ has for decomposition:
$$M(C)=\bigoplus_{\substack{\lambda \text{ shape of the standard } \\ \text{tableaux }P(\tau) ,Q(\tau)\text{ with } \tau \in C}}\!\!\!\!\!\!\!\! V^{\lambda}$$},
$$\card\,\mathfrak{I}_{n}=\sum_{\lambda\in \Part_{n}} \card\,\mathrm{Std}(\lambda)=\sum_{\lambda \in \Part_{n}}\dim \lambda.$$
We shall denote by $I_{n}$ the number of involutions of size $n$. A permutation $\sigma \in \sym_{n}$ is an involution if and only if its cycle type is a partition of type $1^{n-2k}\,2^{k}$; so,
$$I_{n}=\sum_{k=0}^{\lfloor \frac{n}{2}\rfloor} \card \,C_{1^{n-2k}\,2^{k}}=\sum_{k=0}^{\lfloor \frac{n}{2}\rfloor} \frac{n!}{k!\,n-2k!\,2^{k}}.$$
\begin{definition}[Gelfand measure]
The Gelfand measure of the symmetric group $\sym_{n}$ is the probability measure $\Gel_{n}$ on $\Part_{n}$ associated to the Gelfand model, that is to say that
$$\Gel_{n}[\lambda]=\frac{\dim\lambda}{\sum_{\mu \in \Part_{n}} \dim \mu}=\frac{\dim\lambda}{I_{n}}.$$
\end{definition}\bigskip

\noindent This Gelfand measure is also the push-forward of the uniform measure on $\mathfrak{I}_{n}$ by the RSK correspondence, and it is a particular case of $\beta$-Plancherel measures, see \cite{BR01} and the last section.  Let us denote by $\chi^{\lambda}$ the \emph{normalized} irreducible character of type $\lambda$; hence, $\chi^{\lambda}(\sigma)=\varsigma^{\lambda}(\sigma)/\dim \lambda$. Then, if $\lambda$ is picked randomly according to the Gelfand measure $\Gel_{n}$, one has:
$$\Gel_{n}[\chi^{\lambda}(\sigma)]=\frac{1}{I_{n}}\,\sum_{\lambda \in \Part_{n}} (\dim \lambda) \,\chi^{\lambda}(\sigma)=\frac{1}{I_{n}}\,\sum_{\lambda \in \Part_{n}} \varsigma^{\lambda}(\sigma)=\frac{\tr\rho(\sigma)}{\tr\rho(\id)}=\frac{\card\{\tau \in \sym_{n}\,\,|\,\,\tau^{2}=\sigma\}}{\card\{\tau \in \sym_{n}\,\,|\,\,\tau^{2}=\id\}}$$
This simple identity will allow the asymptotic study of the Gelfand measures.
\bigskip\bigskip

\section{Bases and filtrations of the algebra of observables of Young diagrams}\label{obs}

As already pointed out in \cite{IO02}, \cite{FM10} or \cite{Mel10b}, a very powerful tool for the asymptotic study of random partitions arising from representation theory is the algebra $\obs$ of \textbf{observables of diagrams}, also known as polynomial functions on diagrams (\emph{cf.} \cite{KO94}). In this section, we recall the combinatorics of three different bases of $\obs$. As in  the papers \cite{FM10}, \cite{Mel10a} and \cite{Mel10b}, we shall also use \'Sniady's theory of cumulants of observables (\emph{cf. \cite{Sni06}}), but this will be recalled in \S\ref{asymptoticindependence}. Using this theory of observables of diagrams, we will be able to prove the asymptotic gaussian behaviour of the rescaled deviations of the observables under the Gelfand measures; but in contrast to the cases of Plancherel and Schur-Weyl measures, these rescaled deviations will be non centered.\bigskip

\subsection{Continuous Young diagrams and renormalizations}\label{n}
If $\lambda$ is an integer partition of size $n$, we rotate its Young diagram by 45 degrees and we consider the upper boundary of the drawing as a continuous function $s \mapsto \lambda(s)$ that is equal to $|s|$ if $s \leq -\ell(\lambda)$ or if $s \geq \lambda_{1}$, see figure \ref{continuous}.
\figcap{\includegraphics{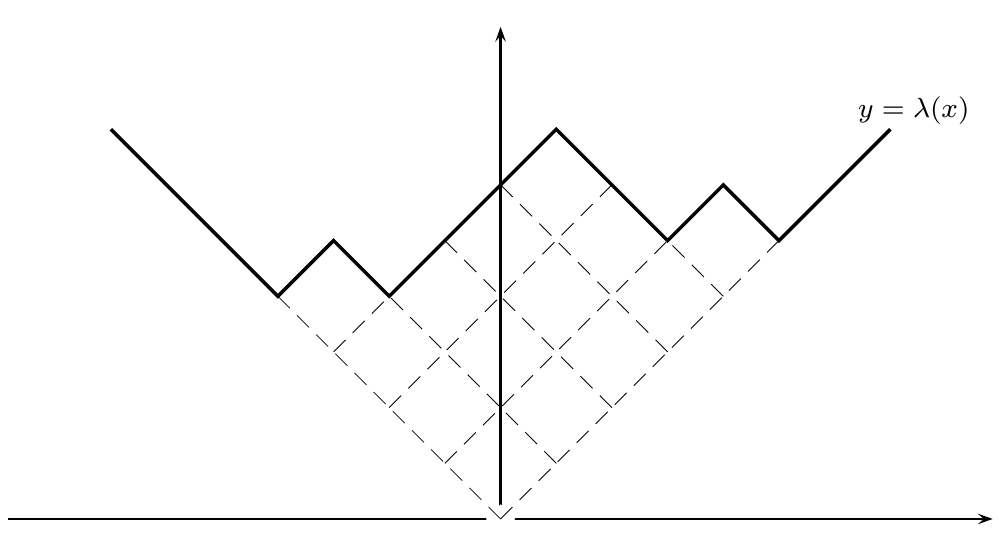}}{Function $s\mapsto \lambda(s)$ associated to the Young diagram $\lambda=(5,4,4,1)$.\label{continuous}}
\clearpage

More generally, we call \textbf{continuous Young diagram} any continuous function $\omega : \R \to \R$ that is Lipschitz with constant $1$, and equal to $|s|$ for $|s|$ big enough. The \textbf{area} of a continuous Young diagram is defined by $\mathcal{A}(\omega)=\int_{\R}(\omega(s)-|s|)\,ds$, and the \textbf{support} of $\omega$ is the support of the function $\omega(s)-|s|$. So for instance, the LSKV curve is a continuous Young diagram with area $2$ and support $[-2,2]$. These notions allow to consider \textbf{renormalized} Young diagrams. So, if $\omega$ is a (continuous) Young diagram and if $t$ is a positive real number, we shall denote by $\omega^{t}$ the renormalized continuous Young diagram
$$\omega^{t}(s)=\frac{\omega(\sqrt{t}s)}{\sqrt{t}}.$$
If $\omega$ had area $\mathcal{A}$ and support $[a,b]$, then $\omega^{t}$ has area $\mathcal{A}/t$ and support $[a/\sqrt{t},b/\sqrt{t}]$. In particular, if $\lambda$ is an integer partition of size $n$, we shall denote by $\lambda^{*}$ the continuous Young diagram $\lambda^{n}$; it always has for area $2$.\bigskip\bigskip

\subsection{Interlacing moments, central characters and free cumulants}
Any partition $\lambda$ is characterized by the two interlacing sequences
$x_{1}<y_{1}<x_{2}<y_{2}<\cdots<x_{s-1}<y_{s-1}<x_{s}$
of the local minima and the local maxima of $s \mapsto \lambda(s)$. In this setting, the $k$-th \textbf{interlacing moment} of $\lambda$ is defined by $$p_{k}(\lambda)=\sum_{i=1}^{s}(x_{i})^{k}-\sum_{i=1}^{s-1}(y_{i})^{k}.$$
Notice that $p_{1}(\lambda)$ is always zero, whereas $p_{2}(\lambda)=\mathcal{A}(\lambda)=2|\lambda|$. More generally, if $\lambda$ is a \emph{continuous} Young diagram, we define its $k$-th interlacing moment by
$$p_{k}(\omega)=\int_{\R} \sigma_{\omega}''(s)\,s^{k}\,ds,$$
where $\sigma_{\omega}=\frac{\omega(s)-|s|}{2}$ is the \textbf{charge} of the continuous Young diagram. The \textbf{generating function} of the continuous Young diagram $\lambda$ is defined by:
$$G_{\omega}(z)=\frac{1}{z}\exp\left(\sum_{k=1}^{\infty}\frac{p_{k}(\omega)}{k}\,z^{-k}\right)=\frac{1}{z}\,\exp\left(-\int_{\R}\frac{\sigma'(s)}{z-s}\,ds\right)$$
For a true Young diagram, $G_{\lambda}(z)=\frac{\prod_{i=1}^{v-1}z-y_{i}}{\prod_{i=1}^{v}z-x_{i}}$. An \textbf{observable of diagrams} is a (complex) linear combination of products $p_{\mu}=p_{\mu_{1}}\,p_{\mu_{2}}\cdots p_{\mu_{r}}$ of interlacing moments of continuous Young diagrams. These functions form a commutative algebra $\obs$ that is graded by $\mathrm{wt}(p_{k\geq 2})=k$, and $\mathrm{wt}(p_{1})=0$. Another basis of $\obs$ is provided by the free cumulants, that are defined in the following way. For any continuous diagram $\omega$, $G_{\omega}(z)\simeq 1/z$ when $z$ goes to infinity, so $G_{\omega}(z)$ admits a formal inverse $R_{\omega}(z)$ in a vicinity of $\infty$, and
$$R_{\omega}(z)=\frac{1}{z}\left(1+\sum_{k=1}^{\infty}R_{k}(\omega)\,z^{k}\right).$$
Notice that $R_{1}(\omega)$ is always $0$. The coefficients $R_{k\geq 2}(\omega)$ are called \textbf{free cumulants} of the continuous Young diagram $\omega$, and by using Lagrange inversion formula, one can show that these functions form an algebraic basis of $\obs$. Moreover, $\mathrm{wt}(R_{k})=k$ for any $k\geq 2$, and $R_{k}$ is an homogeneous observable. For more details on the combinatorics of free cumulants of diagrams, we refer to \cite{Bia98} and \cite{Bia03}.\bigskip\bigskip

A remarkable fact is that $\obs$ is also linearly generated by rescaled versions of the irreducible characters of the symmetric groups (\cite{IO02}). More precisely, if $\mu$ is a partition of size $k$, we introduce the \textbf{central character}
$$\varSigma_{\mu}(\lambda)=\begin{cases}
n^{\downarrow k}\,\chi^{\lambda}(\mu \sqcup 1^{n-k})&\text{if }|\lambda|=n\geq k,\\
0&\text{if }|\lambda|=n<k,
\end{cases}$$
where the falling factorial is defined by $n^{\downarrow k}=n(n-1)\cdots(n-k+1)$. It can be shown that $\varSigma_{\mu}$ is an observable of diagrams of weight $|\mu|+\ell(\mu)$, and that
$$\varSigma_{\mu}=R_{\mu_{1}+1}\,R_{\mu_{2}+1}\,\cdots\,R_{\mu_{r}+1}+(\text{observable of weight strictly smaller than } |\mu|+\ell(\mu)).$$
As a consequence, the central characters $\varSigma_{\mu}$ form a linear basis of $\obs$ when $\mu$ runs over partitions, and the cyclic central characters $\varSigma_{k\geq 1}$ form an algebraic basis. In particular, this allows to consider central characters of continuous diagrams: it suffices to expand $\varSigma_{\mu}(\omega)$ in the basis of interlacing moments, and to use the general definition of these latter observables for continuous diagrams. So, to summarize:
$$\obs=\C[p_{2},p_{3},\ldots,p_{n},\ldots]=\C[R_{2},R_{3},\ldots,R_{n},\ldots]=\C[\varSigma_{1},\varSigma_{2},\ldots,\varSigma_{n},\ldots]$$\bigskip

Two other facts are worth being mentioned. First, if $f$ is an \emph{homogeneous} observable of weight $k$, then for any continuous diagram $\omega$ and any real parameter $t$, 
$$f(\omega^{t})=t^{-k/2}\,f(\omega).$$
On the other hand, the central characters $\varSigma_{\mu}$ can be interpreted as elements of the \textbf{algebra of Ivanov and Kerov} that is a subalgebra of the \textbf{algebra of partial permutations}, \emph{cf.} \cite{IK99}. Hence, as a linear combination of partial permutations,
$$\varSigma_{\mu}=\sum\,\, (a_{1,1},\ldots, a_{1,\mu_{1}})\,(a_{2,1},\ldots, a_{2,\mu_{2}})\,\cdots\,(a_{r,1},\cdots,a_{r,\mu_{r}})
$$
where the sum is taken on injective functions $a : \{(i,j) \,\,|\,\, 1 \leq i \leq \ell(\mu),\,\,1\leq j \leq \mu_{i}\} \to \N^{*}$, and a cycle $(a_{i,1},\ldots,a_{i,\mu_{i}})$ is attached to the support $\{a_{i,1},\ldots,a_{i,\mu_{i}}\}$. This interpretation provides a combinatorial way to deal with products of central characters (see in particular \cite[\S3.3]{FM10}), and moreover, it permits to define another filtration of algebra on $\obs$ given by \textbf{Kerov's degree}:
$$\deg_{K}(\varSigma_{\mu})=\mu+m_{1}(\mu)$$
This is indeed a filtration of algebra on $\obs$, see \cite[Proposition 10.1]{IK99} and \cite[Proposition 4.7]{IO02}; but in constrast to the weight filtration, a product of two central characters $\varSigma_{\lambda}\,\varSigma_{\mu}$ has not the same top homogeneous component for  Kerov's degree as $\varSigma_{\lambda \sqcup \mu}$. Actually, it is the case when $\lambda$ and $\mu$ have no part in common, see \cite[Proposition 4.13]{IO02}:
$$\varSigma_{\lambda}\,\varSigma_{\mu}=\varSigma_{\lambda\sqcup \mu} + (\text{observable of smaller Kerov degree})\quad\text{if }m_{i}(\lambda)\,m_{i}(\mu)=0 \text{ forall }i.$$
Later, we shall also describe the component of higher Kerov degree of a power $(\varSigma_{k})^{m}$, which is in some sense the opposite case of the previous situation. Another important result for the study of fluctuations of the Gelfand measures is the description of the higher Kerov degree component of the $k$-th interlacing moment:
$$p_{k}=\sum_{j=0}^{\lfloor \frac{k-3}{2}\rfloor}\frac{k^{\downarrow j+1}}{j!}\,\varSigma_{k-1-2j}\,(\varSigma_{1})^{j} +\begin{cases}\binom{k}{k/2}\,(\varSigma_{1})^{k/2}&\text{if }k \text{ is even},\\
0&\text{otherwise},\end{cases}$$
plus some observable of Kerov degree smaller than $k-2$. We refer to \cite[Proposition 7.3]{IO02} for a proof of this decomposition; it will be useful later for linking the asymptotics of observables of diagrams to the asymptotics of the actual shapes of the Young diagrams.
\bigskip
\bigskip

That said, the \textbf{method of noncommutative moments} in the setting of random partitions consists in studying the expectations $\esper[f(\lambda)]$ with $f$ in one of the three bases of $\obs$ presented above. In many situations, this is sufficient to understand the first and second order asymptotics, see \cite{IO02}, \cite{FM10}, \cite{Mel10b}. Moreover, given a model of random partitions of size $n$ coming from a linear representation of $\sym_{n}$, it is not difficult to compute the expectations of the central characters $\varSigma_{\mu}$ viewed as random variables of the random partitions $\lambda \in \Part_{n}$, because they are related to the trace of the representation. Hence, the method of noncommutative moments is extremely versatile, and we will see that it is indeed a powerful tool in the case of Gelfand measures. 
\bigskip
\bigskip

\section{Enumeration of square roots in the symmetric group and asymptotic distribution of the central characters}
In this section, we will prove that if $\lambda$ is picked randomly according to the Gelfand measure $\Gel_{n}$, then the rescaled central characters $\frac{\varSigma_{k}(\lambda)}{n^{k/2}}$ are asymptotically independent gaussian variables. There is a similar result in the case of Plancherel measures (\emph{cf.} \cite[Theorem 6.1]{IO02}), but our computations are quite different, and they involve the following problem\footnote{The formulas given in \cite{APR07} for these numbers seem to be false, unless we did not understand their notations of multinomial coefficients.}: given a permutation $\sigma \in \sym_{n}$, how many permutations $\tau$ are ``square roots of $\sigma$'', that is to say that $\tau^{2}=\sigma$?
\bigskip\bigskip

\subsection{Computation of the expectations of the central characters}
Let $\mu=1^{m_{1}}\,2^{m_{2}}\,\cdots \,s^{m_{s}}$ be a partition with $m_{1}$ parts of size $1$, $m_{2}$ parts of size $2$, \emph{etc.} If $n$ is an integer greater than $|\mu|$, then:
$$\Gel_{n}[\varSigma_{\mu}]=n^{\downarrow |\mu|} \,\Gel_{n}[\chi^{\lambda}(\mu\sqcup 1^{n-|\mu|})]=n^{\downarrow |\mu|} \,\Gel_{n}[\chi^{\lambda}(1^{n-|\mu|+m_{1}}\,2^{m_{2}}\,\cdots \,s^{m_{s}})]$$
and because of Proposition \ref{tracegelfand}, this expectation is related to the number of square roots in $\sym_{n}$ of a permutation of cycle type $1^{n-|\mu|+m_{1}}\,2^{m_{2}}\,\cdots \,s^{m_{s}}$:
$$\Gel_{n}[\varSigma_{\mu}]=n^{\downarrow |\mu|} \,\frac{\card\{\tau \in \sym_{n}\,\,|\,\,\tau^{2}=\sigma_{\mu\sqcup 1^{n-|\mu|}}\}}{I_{n}}$$
Let us determine precisely the cardinality of the set of square roots. The square of any odd cycle is a cycle of same length, whereas the square of any even cycle of length $2l$ is a product of two disjoint $l$-cycles. Consequently, if $\tau$ is a permutation of cycle type $1^{k_{1}}\,2^{k_{2}}\,\cdots \,s^{k_{s}}$, then the cycle type of $\tau^{2}$ is:
$$\left(1^{k_{1}}\,3^{k_{3}}\,5^{k_{5}}\,\cdots\right)\sqcup \left(1^{2k_{2}}\,2^{2k_{4}}\,3^{2k_{6}}\,\cdots\right)$$
So, a permutation $\sigma$ of cycle type $\mu$ is a square if and only if $m_{2i}(\mu)$ is even for any even integer $2i$. In that case, to choose a square root of $\sigma$, one has to:
\begin{enumerate}
\item For any odd integer $i$, choose the integers $k_{i}$ and $k_{2i}$ such that $m_{i}(\mu)=k_{i}+2k_{2i}$. Then, choose a partial matching of $2k_{2i}$ of the $i$-cycles, and for each pair $(c_{1},c_{2})$ in this matching, choose a $2i$-cycle whose square is $c_{1}c_{2}$; there are $i$ such cycles.
\item For any even integer $2i$, just choose a complete matching of the $m_{2i}(\mu)$ cycles of length $2i$, and for each pair $(c_{1},c_{2})$ in this matching, choose a $4i$-cycle whose square is $c_{1}c_{2}$; there are $2i$ such cycles.
\end{enumerate}
So, the correct formula (in contrast to \cite[Corollary 3.2]{APR07}) is given by:
\begin{proposition}[Number of square roots of a permutation]
If $\sigma$ has cycle type $\mu=1^{m_{1}}\,2^{m_{2}}\,\cdots\,s^{m_{s}}$, then $\card\{\tau \in \sym_{n}\,\,|\,\,\tau^{2}=\sigma\}=\prod_{i=1}^{s} f(i,m_{i})$, where:
$$f(i,m)=\begin{cases}0&\text{if }i\text{ is even and }m\text{ is odd},\\
\frac{m!}{m/2!}\,\left(\frac{i}{2}\right)^{m/2}&\text{if }i\text{ and }m\text{ are even},\\
\sum_{k=0}^{\lfloor \frac{m}{2}\rfloor}\frac{m!}{m-2k!\,k!}\, \left(\frac{i}{2}\right)^{k}&\text{if }i\text{ is odd}.\end{cases}$$ 
\end{proposition}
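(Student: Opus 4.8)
The plan is to build on the cycle-type computation recorded just before the statement, reducing the global count to a product of independent local counts indexed by the cycle lengths $i$ of $\sigma$. Recall that a cycle of $\tau$ of odd length $i$ squares to a single $i$-cycle, while a cycle of $\tau$ of even length $2\ell$ squares to a product of two disjoint $\ell$-cycles. Hence every cycle of a square root $\tau$ of $\sigma$ is responsible for cycles of $\sigma$ of exactly one length: an odd $i$-cycle of $\tau$ produces the length $i$, and a $2\ell$-cycle of $\tau$ produces the length $\ell$. Reading this backwards, the $m_{i}$ cycles of $\sigma$ of length $i$ are produced either by $\tau$-cycles of the same (odd) length $i$, or in pairs by $\tau$-cycles of length $2i$; when $i$ is even only the second mechanism is available, which already forces $m_{i}$ to be even and explains the vanishing case $f(i,m)=0$ for $i$ even and $m$ odd. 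Since the supports attached to distinct cycle lengths of $\sigma$ are disjoint, each cycle of $\tau$ lies within the union of supports of the length-$i$ cycles of $\sigma$ it produces, so the cycles of $\tau$ split into independent groups, one per length $i$, and specifying $\tau$ is the same as choosing, independently for each $i$, how to produce the block of $m_{i}$ cycles of $\sigma$ of length $i$. This yields the factorization $\card\{\tau \in \sym_{n}\mid \tau^{2}=\sigma\}=\prod_{i=1}^{s} f(i,m_{i})$ and reduces the problem to evaluating each factor.

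First I would isolate two elementary counting lemmas at the level of a single block. The first: for an odd integer $i$ and a given $i$-cycle $c$, there is exactly one $i$-cycle $d$ with $d^{2}=c$. Indeed such a $d$ has the same support as $c$ and commutes with $c=d^{2}$, hence lies in the centralizer $\langle c\rangle$ of order $i$; writing $d=c^{t}$, the equation $d^{2}=c$ becomes $2t\equiv 1 \pmod i$, uniquely solvable precisely because $i$ is odd (and then $\gcd(t,i)=1$, so $c^{t}$ is genuinely an $i$-cycle). The second, which is the main combinatorial input, is that for two disjoint $\ell$-cycles $c_{1},c_{2}$ the number of $2\ell$-cycles $d$ with $d^{2}=c_{1}c_{2}$ equals $\ell$. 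Any such $d$ must alternate between the two supports, sending the support of $c_{1}$ to that of $c_{2}$ and back; fixing the image under $d$ of one chosen point of $c_{1}$ among the $\ell$ points of $c_{2}$ then forces all remaining values through $d^{2}=c_{1}c_{2}$, and one checks that each of the $\ell$ choices yields a single $2\ell$-cycle. I expect this interleaving count to be the delicate step, since one must verify both that the $\ell$ interleavings are pairwise distinct and that no further $2\ell$-cycle can occur.

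With these lemmas the assembly is routine bookkeeping. For odd $i$, producing the block of $m$ cycles amounts to choosing a partial matching with some number $k$ of pairs, the $m-2k$ unmatched cycles admitting a unique square root by the first lemma, and each of the $k$ pairs admitting $i$ realizing $2i$-cycles by the second. Since the number of partial matchings with $k$ pairs among $m$ objects is $\frac{m!}{(m-2k)!\,k!\,2^{k}}$, summing the contribution $\frac{m!}{(m-2k)!\,k!\,2^{k}}\,i^{k}$ over $k$ gives $f(i,m)=\sum_{k=0}^{\lfloor m/2\rfloor}\frac{m!}{(m-2k)!\,k!}\,(i/2)^{k}$. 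For even $i$ only complete matchings survive, whence the requirement that $m$ be even; there are $\frac{m!}{(m/2)!\,2^{m/2}}$ of them and each pair contributes a factor $i$, giving $f(i,m)=\frac{m!}{(m/2)!}\,(i/2)^{m/2}$. Collecting the three cases reproduces the stated formula for $f(i,m)$ and hence the proposition.
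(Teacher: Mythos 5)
Your proof is correct and follows essentially the same route as the paper: reduce to independent blocks indexed by cycle length, use partial matchings (with a unique odd-cycle square root for unmatched cycles and $i$ interleaving $2i$-cycles per matched pair) for odd lengths, and complete matchings for even lengths. You in fact supply more detail than the paper, which merely asserts the two local counts (uniqueness of the odd square root and the $\ell$ interleavings) that you prove via the centralizer argument and the alternation argument.
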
\bigskip

\begin{example}
Suppose that $\sigma$ has cycle type $1^{3}\,2^{2}\,3$ in $\sym_{10}$; up to conjugacy, we can suppose that $\sigma=(1)(2)(3)(4,5)(6,7)(8,9,10)$. If $\tau^{2}=\sigma$, then the three fixed points $1,2,3$ correspond either to a component $(1)(2)(3)$ in $\tau$, or to one of the involutions $(1,2)(3)$, $(1,3)(2)$ or $(1)(2,3)$; whence $f(1,3)=4$ possibilities. The two $2$-cycles come from one of the $4$-cycles $(4,6,5,7)$ and $(4,7,5,6)$; whence $f(2,2)=2$ possibilities. Finally, the $3$-cycle $(8,9,10)$ comes necessarily from the $3$-cycle $(8,10,9)$ in $\tau$; whence $f(3,1)=1$ possibilities. So, the number of square roots of $\sigma$ in $\sym_{10}$ is
$$f(1,3)\,f(2,2)\,f(3,1)=8.$$
\end{example}
\bigskip

\noindent As a consequence of this enumeration, one sees that $$\Gel_{n}[\varSigma_{\mu}] = n^{\downarrow |\mu|}\,\frac{f(1,n-|\mu|+m_{1})}{f(1,n)}\,\left(\prod_{i=2}^{s}f(i,m_{i})\right)=n^{\downarrow |\mu|}\,\frac{I_{n-|\mu|+m_{1}}}{I_{n}}\,\left(\prod_{i=2}^{s}f(i,m_{i})\right),$$
so the asymptotics of the expectations $\Gel_{n}[\varSigma_{\mu}]$ can be deduced from those of the numbers of involutions $I_{n}$. The exponential generating function of these numbers is
$$I(z)=\sum_{n=0}^{\infty}\frac{I_{n}}{n!}\,z^{n}=\sum_{n=0}^{\infty}\sum_{k=0}^{\lfloor \frac{n}{2}\rfloor}\frac{1}{n-2k!\,k!}\,z^{n-2k}\,\left(\frac{z^{2}}{2}\right)^{k}=\exp\left(z+\frac{z^{2}}{2}\right),$$
and by saddle-point analysis (see \cite[Chapter VIII, p. 559]{FS09}), one obtains $I_{n}\simeq \left(\frac{n}{\E}\right)^{\frac{n}{2}}\,\frac{\E^{\sqrt{n}-1/4}}{\sqrt{2}}$ (this is a very classical estimate similar to Stirling formula).
\begin{theorem}[Asymptotic expectations of the central characters]\label{a}
When $n$ goes to infinity,
$$\Gel_{n}[\varSigma_{\mu}]\simeq \left(\prod_{i=2}^{s}f(i,m_{i})\right)\,n^{\frac{|\mu|+m_{1}(\mu)}{2}}.$$
\end{theorem}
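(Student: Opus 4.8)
The plan is to start from the exact formula already derived in the excerpt, namely
$$\Gel_{n}[\varSigma_{\mu}]=n^{\downarrow |\mu|}\,\frac{I_{n-|\mu|+m_{1}}}{I_{n}}\,\left(\prod_{i=2}^{s}f(i,m_{i})\right),$$
and to reduce the whole statement to the asymptotic behaviour of the two prefactors $n^{\downarrow |\mu|}$ and the ratio $I_{n-|\mu|+m_{1}}/I_{n}$, since the product $\prod_{i=2}^{s}f(i,m_{i})$ is a constant independent of $n$ and simply carries over to the right-hand side. First I would record that $n^{\downarrow |\mu|}=n(n-1)\cdots(n-|\mu|+1)\simeq n^{|\mu|}$ as $n\to\infty$, which is immediate. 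The real content is therefore the estimate of $I_{n-|\mu|+m_{1}}/I_{n}$.

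For that ratio I would invoke the classical asymptotic estimate quoted just above the theorem, $I_{n}\simeq \left(\frac{n}{\E}\right)^{n/2}\frac{\E^{\sqrt{n}-1/4}}{\sqrt{2}}$, which follows from saddle-point analysis of the generating function $I(z)=\exp(z+z^{2}/2)$. Writing $m=n-|\mu|+m_{1}$ and noting that $m=n-(|\mu|-m_{1})$ differs from $n$ by the fixed constant $d:=|\mu|-m_{1}=\sum_{i\ge2}i\,m_{i}$, I would compute the ratio $I_{m}/I_{n}$ by comparing the two estimates. The dominant factor is $\left(\frac{m}{\E}\right)^{m/2}\big/\left(\frac{n}{\E}\right)^{n/2}$; here one must carefully expand $\left(\frac{n-d}{\E}\right)^{(n-d)/2}$ against $\left(\frac{n}{\E}\right)^{n/2}$, using $\log(n-d)=\log n+\log(1-d/n)=\log n-d/n+O(n^{-2})$, and track the terms of order $n\log n$, $n$, $\log n$, $\sqrt{n}$ and $O(1)$. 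The subexponential factor $\E^{\sqrt{m}-1/4}/\E^{\sqrt{n}-1/4}=\E^{\sqrt{n-d}-\sqrt{n}}\to 1$ since $\sqrt{n-d}-\sqrt{n}\to 0$. The upshot I expect is $I_{m}/I_{n}\simeq n^{-d/2}=n^{-(|\mu|-m_{1})/2}$, possibly with an additional multiplicative constant that must turn out to be $1$.

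Combining the two pieces then gives
$$\Gel_{n}[\varSigma_{\mu}]\simeq n^{|\mu|}\cdot n^{-(|\mu|-m_{1})/2}\cdot\left(\prod_{i=2}^{s}f(i,m_{i})\right)=\left(\prod_{i=2}^{s}f(i,m_{i})\right)\,n^{(|\mu|+m_{1})/2},$$
which is exactly the claimed formula. So the logical skeleton is short: plug in the known closed form, replace $n^{\downarrow|\mu|}$ by $n^{|\mu|}$, and replace the ratio of involution numbers by $n^{-(|\mu|-m_{1})/2}$.

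The main obstacle is the bookkeeping in the ratio $I_{m}/I_{n}$: the factor $(n/\E)^{n/2}$ grows faster than any polynomial, so to extract a clean power of $n$ one must verify that the super-polynomial and exponential contributions from numerator and denominator cancel to leading order, leaving precisely $n^{-d/2}$ and no stray constant or $\E^{c\sqrt{n}}$ term. The delicate point is that both $\left(\frac{n-d}{\E}\right)^{(n-d)/2}$ and $\E^{\sqrt{n-d}}$ must be expanded simultaneously and their $n$-linear and $\sqrt{n}$-order corrections checked to vanish in the quotient; an error of a constant factor or a half-power of $n$ here would propagate directly into the exponent of the final estimate. I would therefore treat this expansion as the heart of the argument and carry out the Taylor expansions to sufficient order, whereas everything else—the algebra with $f(i,m_{i})$ and the falling factorial—is routine.
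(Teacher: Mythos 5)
Your proposal is correct and follows essentially the same route as the paper: the author likewise replaces $n^{\downarrow|\mu|}$ by $n^{|\mu|}$ and computes $I_{n-k}/I_{n}\simeq\left(\frac{n-k}{n}\right)^{(n-k)/2}\left(\frac{\E}{n}\right)^{k/2}\E^{\sqrt{n-k}-\sqrt{n}}\simeq n^{-k/2}$ with $k=|\mu|-m_{1}(\mu)$, the constants $\E^{-k/2}$ and $\E^{k/2}$ cancelling exactly as you anticipated. The "delicate bookkeeping" you flag does resolve cleanly with no stray constant, so your skeleton is the paper's proof.
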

\begin{proof}
Asymptotically, the term $n^{\downarrow |\mu|}$ can be replaced by $n^{|\mu|}$, and
$$\frac{I_{n-k}}{I_{n}}\simeq \left(\frac{n-k}{n}\right)^{\frac{n-k}{2}}\,\left(\frac{\E}{n}\right)^{\frac{k}{2}}\,\E^{\sqrt{n-k}-\sqrt{n}}\simeq \frac{1}{n^{\frac{k}{2}}}$$
with $k=|\mu|-m_{1}(\mu)$. This result implies in particular the following estimate: for any observable $f$, $\Gel_{n}[f]=O(n^{\deg_{K}(f)/2})$. Indeed, this is true for the central characters that form a linear basis of $\obs$. Since Kerov's degree is always smaller than the weight of observables, one has also $\Gel_{n}[f]=O(n^{\mathrm{wt}(f)/2})$ for any observable.
\end{proof}\bigskip\bigskip

\subsection{Asymptotic distribution of the scaled cyclic central characters} An important result related to Kerov's central limit theorem is the following: under the Plancherel measures $\proba_{n}$, the scaled central characters $\frac{\varSigma_{k}}{n^{k/2}}$ with $k \geq 2$ converge towards independent gaussian variables $\xi_{k}\sim \mathcal{N}(0,k)$ of variances $k$. The goal of this section is to prove the analogue of this result in the setting of Gelfand measures:
\begin{theorem}[Asymptotic distribution of the cyclic central characters]\label{nolove}
Under the Gelfand measures $\Gel_{n}$, the scaled central characters $\frac{\varSigma_{k}}{n^{k/2}}$ with $k \geq 2$ converge towards independent gaussian variables $\xi_{k}\sim \mathcal{N}(e_{k},2k)$ of variances $2k$ and expectations
$$e_{k}=\begin{cases} 1 &\text{if }k \text{ is odd},\\
 0 &\text{if }k \text{ is even}.
 \end{cases}$$
\end{theorem}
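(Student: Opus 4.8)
The plan is to establish Theorem~\ref{nolove} by the \emph{method of cumulants}. Writing $X_{k}=\varSigma_{k}/n^{k/2}$, I will show that the first cumulant of $X_{k}$ tends to $e_{k}$, that the joint second cumulants tend to $2k\,\delta_{k,l}$, and that every joint cumulant of order $r\geq 3$ tends to $0$; since these data characterise the Gaussian vector with independent coordinates $\xi_{k}\sim\mathcal N(e_{k},2k)$, convergence in distribution follows. Everything reduces to $\Gel_{n}$-expectations of products of central characters, which I would compute inside the class algebra of $\sym_{n}$. Denoting by $\mathbf A_{\nu}$ the class sum of cycle type $\nu\sqcup 1^{n-|\nu|}$ (so $\nu$ lists the parts $\geq 2$) and by $a_{\nu}(\lambda)=|C_{\nu}|\,\chi^{\lambda}(\nu)$ the eigenvalue of $\mathbf A_{\nu}$ on $V^{\lambda}$, one has $\varSigma_{k}=k\,a_{(k)}$, and Proposition~\ref{tracegelfand} together with the enumeration of square roots gives the fundamental formula $\Gel_{n}[a_{\nu}]=|C_{\nu}|\,\card\{\tau:\tau^{2}=\sigma_{\nu}\}/I_{n}=|C_{\nu}|\,(\prod_{i}f(i,m_{i}(\nu)))/I_{n}$.

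The means are read off Theorem~\ref{a}: $\Gel_{n}[\varSigma_{k}]\simeq f(k,1)\,n^{k/2}=e_{k}\,n^{k/2}$. The core is the covariance. I would expand the product of class sums $\mathbf A_{k}\,\mathbf A_{l}=\sum_{\nu}c^{\,\nu}_{k,l}\,\mathbf A_{\nu}$, where $c^{\,\nu}_{k,l}$ counts the ways of writing a fixed $\sigma_{\nu}$ as a product $\alpha\beta$ of a $k$-cycle and an $l$-cycle; since $\varSigma_{k}\varSigma_{l}=kl\,a_{k}a_{l}$, this turns $\Gel_{n}[\varSigma_{k}\varSigma_{l}]$ into a finite explicit sum to which the fundamental formula and the estimate $I_{n-j}/I_{n}\simeq n^{-j/2}$ apply. \textbf{The main obstacle is to identify which $\nu$ survive at the order $n^{(k+l)/2}$.} If $\alpha,\beta$ overlap in $j$ points, let $e$ be the number of those common points fixed by $\alpha\beta$ (an own-point of either cycle is always moved), so $\sigma_{\nu}$ moves $p=(k+l-j)-e$ points; then $c^{\,\nu}_{k,l}\asymp n^{e}$ (the $e$ cancelling points are free among the fixed points of $\sigma_{\nu}$) and $\Gel_{n}[a_{\nu}]\asymp(\prod_{i\geq 2}f(i,m_{i}(\nu)))\,n^{p/2}$, whence $c^{\,\nu}_{k,l}\,\Gel_{n}[a_{\nu}]\asymp n^{(k+l-j+e)/2}$. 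As $e\leq j$, the order is at most $n^{(k+l)/2}$, with equality exactly when every common point is fixed; a short argument (a fixed common point forces $\beta=\alpha^{-1}$ there, and a cycle cannot stabilise a proper nonempty part of its support) shows that this occurs only for disjoint supports ($j=0$) or, when $k=l$, for $\beta=\alpha^{-1}$ ($j=k$, $\sigma_{\nu}=\id$). All other configurations, all odd $p$, and all $\nu$ carrying a vanishing factor $f(2i,\text{odd})=0$ are of strictly smaller order.

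It then remains to add up the two surviving contributions. For $k=l$ the identity term ($c^{1^{n}}_{k,k}=|C_{(k)}|\simeq n^{k}/k$, $\Gel_{n}[a_{1^{n}}]=1$) contributes $k\,n^{k}$ to $\Gel_{n}[\varSigma_{k}^{2}]$, while the disjoint term ($c^{(k,k)}_{k,k}=2$) contributes $f(k,2)\,n^{k}$; subtracting $(\Gel_{n}[\varSigma_{k}])^{2}\simeq f(k,1)^{2}n^{k}$ and using $f(k,2)-f(k,1)^{2}=k$ for both parities leaves a total variance $2k\,n^{k}$. For $k\neq l$ there is no identity term, and the disjoint term contributes $e_{k}e_{l}\,n^{(k+l)/2}$, which is cancelled to leading order by $\Gel_{n}[\varSigma_{k}]\Gel_{n}[\varSigma_{l}]$; hence the covariance vanishes. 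This gives $\lim_{n}\mathrm{Cov}(X_{k},X_{l})=2k\,\delta_{k,l}$ and explains the doubling relative to the Plancherel variance $k$: the identity term reproduces the Plancherel contribution, while the disjoint term is a genuinely new contribution arising from the nonvanishing of $\Gel_{n}[a_{(k,k)}]$.

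Finally, for the higher cumulants I would invoke \'Sniady's theory of cumulants of observables (recalled in this section): the cumulant of order $r$ of the central characters $\varSigma_{k_{1}},\dots,\varSigma_{k_{r}}$ is an observable of weight at most $\sum_{t}(k_{t}+1)-2(r-1)$. Combined with the bound $\Gel_{n}[f]=O(n^{\wt(f)/2})$ established in the proof of Theorem~\ref{a}, this shows that $\Gel_{n}$ enjoys the approximate factorisation property and that the joint cumulant of order $r$ of the rescaled characters $X_{k_{1}},\dots,X_{k_{r}}$ is $O(n^{-(r-2)/2})$; it therefore vanishes for $r\geq 3$, the case $r=2$ being exactly the covariance above and $r=1$ the mean. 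The method of cumulants then yields the asserted joint convergence to independent Gaussians $\xi_{k}\sim\mathcal N(e_{k},2k)$. I expect the delicate points to be the surviving-configuration lemma and the verification that the subleading part of the disjoint term cancels cleanly against the squared mean when $k$ is odd.
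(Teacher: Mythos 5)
Your computation of the first two cumulants is sound and takes a genuinely different route from the paper: you work in the class algebra with structure constants $c^{\nu}_{k,l}$ and the overlap/cancellation parameters $(j,e)$, whereas the paper works in the algebra of partial permutations and extracts the top Kerov-degree component of $(\varSigma_{k})^{m}$ for \emph{all} $m$ (Lemma \ref{m}), not just $m=2$; your surviving-configuration argument is exactly the paper's chain argument in the proof of that lemma, specialized to two cycles, and your arithmetic $k+f(k,2)-f(k,1)^{2}=2k$ matches the paper's. The problem is the last step. You assert that the approximate factorization property, hence the bound $k(X_{k_{1}},\ldots,X_{k_{r}})=O(n^{-(r-2)/2})$, follows from \'Sniady's weight bound on cumulants of central characters \emph{combined with} $\Gel_{n}[f]=O(n^{\wt(f)/2})$. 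It does not. \'Sniady's bound $\wt\left(k^{\id}(\varSigma_{k_{1}},\ldots,\varSigma_{k_{r}})\right)\leq \sum_{t}(k_{t}+1)-2(r-1)$ is a purely algebraic statement about the \emph{identity} cumulants taken inside $\obs$; the probabilistic cumulant is recovered through
$$k(X_{1},\ldots,X_{r})=\sum_{\pi \in \mathfrak{Q}(\lle 1,r\rre)} k^{\bullet}\left(k^{\id}(X_{i \in \pi_{1}}), \ldots, k^{\id}(X_{i \in \pi_{s}}) \right),$$
and the terms with several blocks involve \emph{disjoint} cumulants $k^{\bullet}$, which are a property of the measure and must be bounded separately. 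The crude estimate $\Gel_{n}[f]=O(n^{\wt(f)/2})$ applied term by term to the M\"obius expansion of $k^{\bullet}$ only gives $O(n^{(\sum k_{t})/2})$, i.e.\ $O(1)$ after rescaling, with no gain in $r$. The approximate factorization property is automatic for the Plancherel measure because $\proba_{n}[\varSigma_{\mu}]=0$ for $\mu\neq 1^{k}$, but for the Gelfand measure $\Gel_{n}[\varSigma_{\mu}]\simeq \left(\prod_{i\geq 2}f(i,m_{i})\right)n^{(|\mu|+m_{1})/2}$ is nonzero on many classes, and the required decay of $k^{\bullet}$ rests on a genuine cancellation: it is exactly the M\"obius-inversion identity of Lemma \ref{l}, which exploits the multiplicativity of $\mu\mapsto\prod_{i}F(i,m_{i}(\mu))$ over the parts of $\mu$. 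This is the real content of the independence proof and it is absent from your argument.

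Two further points would need attention even once you add that ingredient. First, to reduce the disjoint cumulants to the form $k^{\bullet}(\varSigma_{m_{1}},\ldots,\varSigma_{1^{m_{u+1}}},\ldots)$ on which the M\"obius cancellation acts, the paper needs Lemma \ref{e} (identity cumulants of \emph{distinct} cyclic characters lose one unit of Kerov degree) together with the full Lemma \ref{m}; your overlap analysis only covers products of two cycles, so you would have to extend the surviving-configuration lemma to $m$-fold products to handle identity cumulants with three or more equal entries. Second, the case where all the $k_{t}$ are equal is precisely the one where Lemma \ref{e} fails; the paper disposes of it beforehand by computing all moments of a single $\varSigma_{k}/n^{k/2}$ and matching them with Gaussian moments, and your scheme has no substitute for that step. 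In short, the mean and covariance are correctly handled by a legitimately different method, but the vanishing of the cumulants of order $r\geq 3$ -- which is where almost all of the work lies -- is asserted rather than proved.
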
\bigskip

\begin{lemma}[Component of highest Kerov degree of a power $(\varSigma_{k})^{m}$]\label{m}
For any integers $k\geq 2$ and $m \geq 1$, the component of highest Kerov degree of $(\varSigma_{k})^{m}$ is:
$$\sum_{p=0}^{\lfloor \frac{m}{2}\rfloor} \frac{m!}{m-2p!\,p!}\,\left(\frac{k}{2}\right)^{p}\,\varSigma_{1^{kp}\,k^{m-2p}}.$$
\end{lemma}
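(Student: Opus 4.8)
The plan is to carry out the computation inside the algebra of partial permutations, using the realization of $\varSigma_{k}$ recalled above and reading Kerov's degree directly off the cycle structure of products. Writing $\varSigma_{k}=\sum_{a}(a_{1},\ldots,a_{k})$, the $m$-th power expands as a sum of products $c_{1}c_{2}\cdots c_{m}$ of $k$-cycles, and each such product is a partial permutation $(A,\sigma)$ with support $A=\bigcup_{j}\mathrm{supp}(c_{j})$ and underlying permutation $\sigma=c_{1}\cdots c_{m}$. Such a term contributes to $\varSigma_{\nu}$, where $\nu$ records the cycle lengths of $\sigma$ together with one part equal to $1$ for each fixed point of $\sigma$ lying in $A$. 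Since $\deg_{K}(\varSigma_{\nu})=|\nu|+m_{1}(\nu)$, the Kerov degree attached to the term equals $|A|+\#\{x\in A\mid\sigma(x)=x\}$, so the whole problem reduces to maximizing this quantity over all $m$-tuples $(c_{1},\ldots,c_{m})$ of $k$-cycles and then counting the maximizers.

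For the upper bound I would set $d(x)=\#\{j\mid x\in\mathrm{supp}(c_{j})\}$, so that $\sum_{x\in A}d(x)=mk$, whence $mk-|A|=\sum_{d(x)\geq2}(d(x)-1)\geq\#\{x\mid d(x)\geq2\}$. A short argument shows that a point with $d(x)=1$ is never fixed by $\sigma$: reading $c_{1}\cdots c_{m}$, the unique factor that moves $x$ cannot be compensated by the remaining factors, all of which fix $x$. Hence $\#\{x\mid\sigma(x)=x\}\leq\#\{x\mid d(x)\geq2\}$, and the Kerov degree is at most $|A|+\#\{d\geq2\}\leq mk$, recovering $\deg_{K}((\varSigma_{k})^{m})\leq mk$. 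Equality forces two conditions: (i) every $d(x)\in\{1,2\}$, and (ii) $\sigma$ fixes exactly the points with $d(x)=2$.

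The core of the proof, and the step I expect to be the main obstacle, is the precise description of the equality cases. I claim they are exactly the configurations in which the $m$ cycles split into $m-2p$ \emph{free} cycles carried by pairwise disjoint supports and $p$ \emph{pairs} $\{c_{i},c_{j}\}$ sharing one common $k$-element support on which $c_{j}=c_{i}^{-1}$. I would organize this via the overlap graph on the $m$ cycles (one edge per doubled point) and argue componentwise: inside a connected cluster of $c$ cycles carried by $N$ points the factorization is transitive, so conditions (i)--(ii), combined with the reflection-length (genus) inequality applied to $\sigma|_{\text{cluster}}=\prod c_{j}$ with each factor at reflection length $k-1$, should force $c=2$, full overlap, and mutual inversion; the delicate points are ruling out partial overlaps with $0<\#(\mathrm{supp}(c_{i})\cap\mathrm{supp}(c_{j}))<k$ and clusters of three or more cycles, which I have checked by hand in the smallest cases. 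For such a configuration each pair contributes the identity on its $k$ shared points, hence $k$ parts equal to $1$, while each free cycle contributes one part equal to $k$; the resulting type is therefore $\nu=1^{kp}\,k^{m-2p}$, which pins down exactly which $\varSigma_{\nu}$ occur in the top component.

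Finally I would extract the coefficient of $\varSigma_{\nu}$ with $\nu=1^{kp}\,k^{m-2p}$ by comparing, on both sides, the coefficient of one fixed partial permutation $(A_{0},\sigma_{0})$ of type $\nu$. On the left this is the number of ordered $m$-tuples of cycle-sequences realizing the pair-plus-free structure with product $\sigma_{0}$: choosing and pairing the positions gives $\tfrac{m!}{(m-2p)!\,p!\,2^{p}}$; matching the $m-2p$ free positions to the $k$-cycles of $\sigma_{0}$ and rotating each gives $(m-2p)!\,k^{m-2p}$; distributing the $kp$ fixed points of $\sigma_{0}$ into the $p$ pair-supports with a mutually inverse pair on each gives $(kp)!\,k^{p}$ after simplification. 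On the right, the definition of $\varSigma_{\nu}$ as a sum over injective placements assigns each partial permutation of type $\nu$ the automorphism weight $(kp)!\,(m-2p)!\,k^{m-2p}$. Dividing, every factorial and power of $k$ cancels and leaves precisely $\tfrac{m!}{(m-2p)!\,p!}\left(\tfrac{k}{2}\right)^{p}$, as claimed; I have calibrated this cancellation against the case $m=2$, $p=1$, where it reproduces the coefficient $k$ of $\varSigma_{1^{k}}$ in $(\varSigma_{k})^{2}$.
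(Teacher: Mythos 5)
Your framework is the same as the paper's: expand $(\varSigma_{k})^{m}$ in the algebra of partial permutations, read the Kerov degree of a product $c_{1}\cdots c_{m}$ with support $A$ and underlying permutation $\sigma$ as $\card\,A+\card(\mathrm{Fix}(\sigma)\cap A)$, classify the configurations attaining $km$, and count them. Your upper bound via the multiplicities $d(x)$ is correct (in particular, a point covered by a single factor $c_{j}$ is indeed never fixed, since the product of the remaining factors fixes $x$ and therefore cannot send $c_{j}$'s image of $x$ back to $x$), and your coefficient extraction at the end is correct: it reproduces the paper's count of $\tfrac{m!}{(m-2p)!\,p!\,2^{p}}$ partial matchings times $k^{p}$ for the choices of inverse arrangements.

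The gap is exactly where you flag it: the classification of the equality cases. You assert that a connected component of the overlap graph attaining the bound is either a single free cycle or a pair of mutually inverse cycles with identical supports, but you do not prove it --- ``should force'' and ``checked by hand in the smallest cases'' leave unestablished the one combinatorial fact that carries the whole lemma, and it is precisely the part on which the paper spends most of its proof. The paper closes it by a different route: a direct chain-chasing argument showing that for two cycles of maximal joint degree one has either $S_{1}\cap S_{2}=\emptyset$ or $S_{1}=S_{2}$ and $c_{2}=c_{1}^{-1}$ (iterating $c_{1}c_{2}$ on a point of $S_{2}\setminus S_{1}$ to reach a contradiction), followed by an induction on $m$ that exploits the fact that every partial product $c_{1}\cdots c_{i}$ must itself have maximal degree $ik$ because $\deg_{K}$ is a filtration. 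Your proposed tool does work and would give a cleaner argument: for a connected cluster of $c$ cycles on $N$ points with $D=ck-N$ doubled points, all of which must be fixed, the transitive-factorization bound $\sum_{j}\ell(c_{j})+\ell(\sigma)\geq 2(N-1)$ for the reflection length $\ell$ yields $c(k-1)+(N-D)\geq 2(N-1)$, i.e.\ $c\leq 2$ after substituting $ck=N+D$; and for $c=2$ equality forces $\sigma$ to have exactly $D$ cycles, hence $N=D=k$, full overlap, and $c_{2}=c_{1}^{-1}$. But none of this is in your write-up, so as it stands the proof is incomplete at its crux.
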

\begin{proof}
This lemma is essentially equivalent to the content of \cite[Propositions 4.11, 4.12, 6.2 and 6.3]{IO02}, and it can be proved in a pure combinatorial way if one considers the central characters as elements of the algebra of partial permutations. Kerov's degree can be lifted up to this algebra by setting:
$$\deg_{K}(\sigma,S)=\card(\mathrm{Fix}(\sigma)\cap S)+\card\,S$$
for any partial permutation $(\sigma,S)$. It yields a filtration of algebra, because
$$\deg_{K}((\sigma,S)(\tau,T))=\deg_{K}(\sigma\tau,S\cup T)=\big\{\card(\mathrm{Fix}(\sigma\tau)\cap (S\cup T))-\card\,S\cap T\big\}+\card\,S+\card\,T,$$
and the term between brackets is smaller than $\card (\mathrm{Fix}(\sigma) \cap S)+\card(\mathrm{Fix}(\tau)\cap T)$. Indeed, if $x \in S\cup T$ satisfy $\sigma\tau(x)=x$, then:
\begin{enumerate}
\item The element $x$ can be in $S\setminus T$, but in that case $\tau(x)=x$, so $\sigma(x)=x$ and $x \in \mathrm{Fix}(\sigma)\cap S$.
\item Similarly, the element $x$ can be in $T \setminus S$, and in that case it is also in $\mathrm{Fix}(\tau)\cap T$.
\item Finally, if the two previous assumptions are false, then $x$ is in $S\cap T$.
\end{enumerate}
One concludes that $\card(\mathrm{Fix}(\sigma\tau)\cap (S\cup T) \leq \card (\mathrm{Fix}(\sigma)\cap S) + \card(\mathrm{Fix}(\tau)\cap T)+\card(S\cap T)$, so
$$\deg_{K}((\sigma,S)(\tau,T))\leq \deg_{K}(\sigma,S)+\deg_{K}(\tau,T).$$
As an element of the algebra of partial permutations, 
$$(\varSigma_{k})^{m}=\sum (c_{1},S_{1})(c_{2},S_{2})\cdots(c_{m},S_{m})$$
where the sum is taken over $m$-tuples of $k$-arrangements $(a_{i1},\ldots,a_{ik})$ giving a cycle $c_{i}=(a_{i1},\ldots,a_{ik})$ and a support $S_{i}=\{a_{i1},\ldots,a_{ik}\}$. Since $\deg_{K}(\varSigma_{k})=k$, $\deg_{K}((\varSigma_{k})^{m}) \leq km$, and this is an identity because this is the Kerov degree of a product $(c_{1},S_{1})\cdots(c_{m},S_{m})$ of \emph{disjoint} $k$-cycles. Now, let us collect the products that have indeed this maximal degree. If $\deg_{K}((c_{1},S_{1})\cdots(c_{m},S_{m}))=km$, then since $\deg_{K}$ is a filtration of algebra, one has necessarily:
\begin{align*}
\deg_{K}((c_{1},S_{1})(c_{2},S_{2}))&=2k\\
\deg_{K}((c_{1},S_{1})(c_{2},S_{2})(c_{3},S_{3}))&=3k\\
\vdots\qquad\qquad\qquad\qquad & \vdots\\
\deg_{K}((c_{1},S_{1})(c_{2},S_{2})\cdots (c_{i},S_{i}))&=ik\\
\vdots\qquad\qquad\qquad\qquad & \vdots
\end{align*}
Let us consider the two first terms $(c_{1},S_{1})$ and $(c_{2},S_{2})$. We have shown that:
\begin{align*}2k&=\deg_{K}((c_{1},S_{1})(c_{2},S_{2}))=\big\{\card((\mathrm{Fix}(c_{1}c_{2})\cap (S_{1}\cup S_{2})) -\card\, S_{1}\cap S_{2}\big\}+\card\, S_{1}+\card\, S_{2}\\
&=2k+\big\{\card((\mathrm{Fix}(c_{1}c_{2})\cap (S_{1}\cup S_{2})) -\card\, S_{1}\cap S_{2}\big\}\end{align*}
so each point of the intersection $S_{1}\cap S_{2}$ is a fixed point of $c_{1}c_{2}$. In the following, $u$ is the exponent of $c_{1}c_{2}$, so $(c_{1}c_{2})^{u}=\id$. Let us suppose that $S_{1}$ and $S_{2}$ have a non-empty intersection, but are not equal sets. We denote $c_{1}=(a_{1},\ldots,a_{k})$ and $c_{2}=(b_{1},\ldots,b_{k})$; by hypothesis, there is some $b_{j}$ that is in $S_{2}$, but not in $S_{1}$. We iterate $c_{1}c_{2}$ on this element. If $b_{j+1}$ is not in $S_{1}$, then $c_{1}c_{2}(b_{j})=c_{1}(b_{j+1})=b_{j+1}$, and we can go on with $b_{j+1}$, $b_{j+2}$, \emph{etc.} Hence, we construct a chain $b_{j},b_{j+1}, \ldots,b_{J-1}$ such that all elements are in $S_{2}\setminus S_{1}$, and therefore satisfy $c_{1}c_{2}(b_{p})=b_{p+1}$. But some point $b_{J}$ should be in $S_{1} \cap S_{2}$, because the two supports have a non-empty intersection. Then, $a_{i}=c_{1}(b_{J})$ is a certain $(c_{1}c_{2})^{s}(b_{j})$, and is in $S_{1}$. It cannot be in $S_{1}\cap S_{2}$, because in this case all the $(c_{1}c_{2})^{t}(a_{i})$ are equal to $a_{i}$, and by taking $t=u-s$ we then recover $b_{j}=a_{i}$, that is not in $S_{1}$. So, $a_{i}$ is in $S_{1}\setminus S_{2}$. We continue to iterate $c_{1}c_{2}$ on $a_{i}$; since $a_{i}$ is not in $S_{2}$, $c_{1}c_{2}(a_{i})=a_{i+1}$, and we go on until some $a_{I}$ that is in $S_{1}$ and $S_{2}$ (see figure \ref{absurd}). 

\figcap{\includegraphics{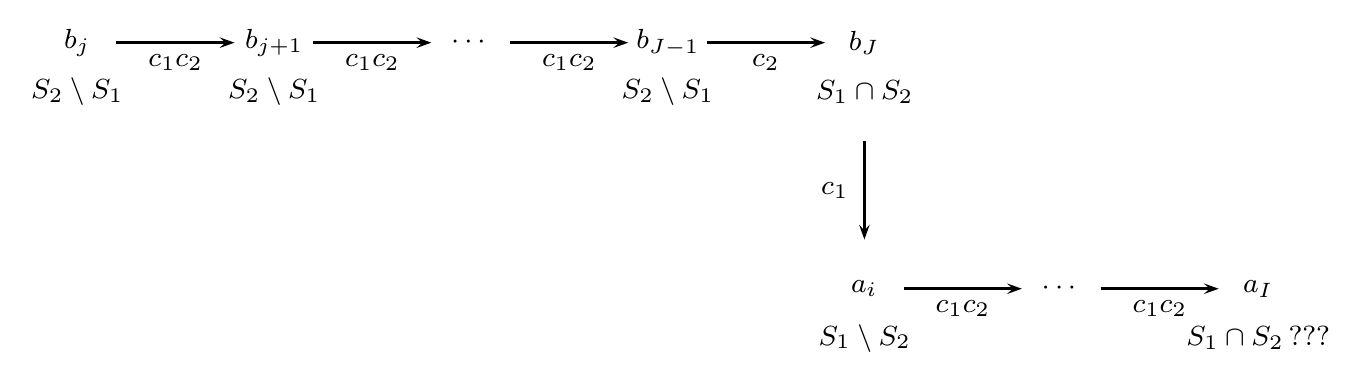}}{If $S_{1} \neq S_{2}$ and $S_{1}\cap S_{2}\neq \emptyset$, then the identity $\deg_{K}((c_{1},S_{1})(c_{2},S_{2}))=2k$ yields a contradiction.\label{absurd}}
\noindent This $a_{I}$ is a certain $(c_{1}c_{2})^{s'}(b_{j})$, and is a fixed point of $c_{1}c_{2}$. Then, by taking $a_{I}=(c_{1}c_{2})^{t'}(a_{I})$ with $t'=u-s'$, we recover $b_{j}$, and that is a contradiction because $b_{j}$ is not in $S_{1}\cap S_{2}$.
\bigskip

We conclude that if $(c_{1},S_{1})(c_{2},S_{2})$ has Kerov degree $2k$, then either $S_{1}\cap S_{2}=\emptyset$ (and this corresponds to an element of cycle type $(k,k)$), or $S_{1}=S_{2}$. Then, $\card\, S_{1}\cup S_{2}=k$, so the number of fixed points of $c_{1}c_{2}$ has to be $k$, which means that $c_{1}=c_{2}^{-1}$. We conclude that $c_{2}$ has to be either disjoint from $c_{1}$, or equal to $c_{1}^{-1}$. For the same reasons, $c_{3}$ has to be either disjoint from $c_{1}$ and $c_{2}$, or equal to the inverse of one of these cycles, assuming in that case that $c_{1}\neq c_{2}^{-1}$. By induction on $m$, we conclude that a cycle $c_{i}$ involved in a product of Kerov degree $km$ is either disjoint of all the other cycles, or paired with one (and only one) cycle that is its inverse. So, the cycle type of a product of maximal Kerov degree is always some $1^{kp}\,k^{m-2p}$,
where $p$ corresponds to the number of pairs of inverse cycles. This integer $p$ being fixed, we have a factor $$\frac{m!}{m-2p!\,p!\,2^{p}}$$ before the term $\varSigma_{1^{kp}\,k^{m-2p}}$ corresponding to the number of partial matchings with $p$ pairs; and there will also be a factor $k^{p}$ in the enumeration, because given an arrangement $(a_{i1},\ldots,a_{ik})$, there is $k$ cyclically different arrangements $(a_{j1},\ldots,a_{jk})$ that correspond to the inverse cycle. So, we conclude that the term of higher Kerov degree in $(\varSigma_{k})^{m}$ is indeed
$$\sum_{p=0}^{\lfloor \frac{m}{2}\rfloor} \frac{m!}{m-2p!\,p!}\,\left(\frac{k}{2}\right)^{p}\,\varSigma_{1^{kp}\,k^{m-2p}}.\vspace{-5mm}$$
\end{proof}
\begin{example}
$(\varSigma_{4})^{4}=\varSigma_{4^{4}}+24\,\varSigma_{1^{4}\, 4^{2}}+48\,\varSigma_{1^{8}}+(\text{terms of Kerov degree strictly smaller than }16)$.
\end{example}
\bigskip

\begin{proof}[Proof of Theorem \ref{nolove}, first part] We shall first prove the \emph{non-joint} convergence in law of the scaled central characters; the asymptotic independence will be proved later. Suppose that $k$ is an even integer and $m$ is odd. Then, the terms $\varSigma_{1^{kp}\,k^{m-2p}}$ involved in $(\varSigma_{k})^{m}$ have null expectations under the Gelfand measures, because $m-2p$ is always odd, and the permutations with these cycle types have no square roots. Consequently, $\Gel_{n}[(\varSigma_{k})^{m}]$ is in that case equal to the expectation of a term of Kerov degree less than $km-1$, so:
$$\Gel_{n}\left[\left(\frac{\varSigma_{k}}{n^{k/2}}\right)^{m}\right]=\frac{O(n^{km-1/2})}{n^{km/2}}=O(n^{-1/2})\to 0$$
Now, suppose that $k$ and $m$ are even. Then, the expectations of the terms of Kerov degree less than $km-1$ are negligible in the asymptotics of $\Gel_{n}\left[\left(\frac{\varSigma_{k}}{n^{k/2}}\right)^{m}\right]$, so:
\begin{align*}
\Gel_{n}\left[\left(\frac{\varSigma_{k}}{n^{k/2}}\right)^{m}\right]&\simeq \sum_{p=0}^{\lfloor \frac{m}{2}\rfloor} \frac{m!}{m-2p!\,p!}\,\left(\frac{k}{2}\right)^{p}\,\Gel_{n}\left[\frac{\varSigma_{1^{kp}\,k^{m-2p}} }{n^{km/2}}\right]=\sum_{p=0}^{\lfloor \frac{m}{2}\rfloor} \frac{m!}{m-2p!\,p!}\,\left(\frac{k}{2}\right)^{p}\,f(k,m-2p)\\
&\simeq\left(\frac{k}{2}\right)^{m/2}\,\sum_{p=0}^{\lfloor \frac{m}{2}\rfloor} \frac{m!}{m/2-p!\,p!}=k^{m/2}\,\frac{m!}{m/2!}=(2k)^{m/2}\,(m-1!!)
\end{align*}
But it is well-known that a standard gaussian variable $X$ of variance $1$ is characterized by the vanishing of all odd moments, and by the identity $\esper[X^{m}]=m-1!!$ for any even number $m$. Since gaussian variables are characterized by their moments, we conclude that if $k$ is even, one has the convergence in law
$$\frac{\varSigma_{k}}{n^{k/2}} \to \mathcal{N}(0,2k) $$
under the Gelfand measures $\Gel_{n}$. The case when $k$ is odd is slightly more complicated, but essentially similar. For the same reasons as before, one can neglict the terms of Kerov degree less than $km-1$ in the asymptotics of the $m$-th moment, so:
\begin{align*}
\Gel_{n}\left[\left(\frac{\varSigma_{k}}{n^{k/2}}\right)^{m}\right]&\simeq \sum_{p=0}^{\lfloor \frac{m}{2}\rfloor} \frac{m!}{m-2p!\,p!}\,\left(\frac{k}{2}\right)^{p}\,\Gel_{n}\left[\frac{\varSigma_{1^{kp}\,k^{m-2p}} }{n^{km/2}}\right]=\sum_{p=0}^{\lfloor \frac{m}{2}\rfloor} \frac{m!}{m-2p!\,p!}\,\left(\frac{k}{2}\right)^{p}\,f(k,m-2p)\\
&\simeq \sum_{p=0}^{\lfloor \frac{m}{2}\rfloor}\sum_{q=0}^{\lfloor \frac{m-2p}{2}\rfloor} \frac{m!}{p!\,q!\,m-2(p+q)!}\,\left(\frac{k}{2}\right)^{p+q}=\sum_{r=0}^{\lfloor \frac{m}{2}\rfloor} \frac{m!}{m-2r!} \,\left(\frac{k}{2}\right)^{r}\,\left(\sum_{p+q=r}\frac{1}{p!\,q!}\right)\\
&\simeq \sum_{r=0}^{\lfloor\frac{m}{2}\rfloor} \frac{m!}{m-2r!\,r!}\,k^{r}
\end{align*}
Let us consider the generating function corresponding to these asymptotics moments. It is given by
$$\sum_{m=0}^{\infty} \sum_{r=0}^{\lfloor\frac{m}{2}\rfloor} \frac{1}{m-2r!\,r!}\,k^{r}\,z^{m}=\sum_{m=0}^{\infty} \sum_{r=0}^{\lfloor\frac{m}{2}\rfloor} \frac{1}{m-2r!\,r!}\,(kz^{2})^{r}\,z^{m-2r}=\exp\left(z+kz^{2}\right),$$
and this is the generating function $\esper[\E^{zX}]$ of a gaussian random variable of law $\mathcal{N}(1,2k)$. Hence, one has the convergence in law
$$\frac{\varSigma_{k}}{n^{k/2}} \to \mathcal{N}(1,2k) $$
under the Gelfand measures. 
\end{proof}\bigskip
\bigskip

\subsection{Asymptotic independence of the scaled cyclic central characters}\label{asymptoticindependence}

Another way to prove the asymptotic gaussian behaviour of the $\varSigma_{k}$'s is to look at the joint cumulants of these observables, and to prove strong asymptotic estimates for the cumulants of order greater than $3$ (see \cite{Sni06}). Recall that if $X_{1},\ldots,X_{r}$ are random variables defined on a same probability space, then their joint cumulant is defined by:
\begin{align*}k(X_{1},\ldots,X_{r})&=\left.\frac{\partial^r}{\partial t_{1}\cdots \partial t_{r}}\right|_{t=0} \exp\left(t_{1}X_{1}+\cdots+t_{r}X_{r}\right)\\
&=\sum_{\pi \in \mathfrak{Q}(\lle 1,r\rre)} \mu(\pi)\,\prod_{\pi_{j} \in \pi} \esper\left[\prod_{i \in \pi_{j}} X_{i}\right]\end{align*}
where $\mathfrak{Q}(\lle 1,r\rre)$ is the set of set partitions of the interval $\lle 1,r\rre$, and $\mu$ is the M\"obius function of this lattice with respect to his maximum, that is to say that
$$\mu(\pi)=(-1)^{\ell(\pi)-1}\,(\ell(\pi)-1)!$$
where $\ell(\pi)$ is the number of parts of $\pi$. Thus, the cumulant $k(X_{1})$ of a single random variable is its expectation $\esper[X_{1}]$, and the cumulant $k(X_{1},X_{2})$ of two random variables is $\esper[X_{1}X_{2}]-\esper[X_{1}]\,\esper[X_{2}]$, that is to say their covariance. A family of random variables is a gaussian vector if and only if all the joint cumulants of order $r\geq 3$ of these variables are equal to zero. Hence, in order to prove the second part of Theorem \ref{nolove}, it is sufficient to establish the following:
$$\forall r\geq 3,\,\,\,\forall l_{1},\ldots,l_{r} \geq 2,\,\,\,k\left(\frac{\varSigma_{l_{1}}}{n^{l_{1}/2}},\frac{\varSigma_{l_{2}}}{n^{l_{2}/2}},\ldots,\frac{\varSigma_{l_{r}}}{n^{l_{r}/2}}\right)=o(1)\quad;\quad\forall l\neq m \geq 2,\,\,\,k\left(\frac{\varSigma_{l}}{n^{l/2}},\frac{\varSigma_{m}}{n^{m/2}}\right)=o(1)$$
Indeed, this will implie that the limiting gaussian variables $\xi_{k}$ form a gaussian vector, and also that they are independent.\bigskip\bigskip

Since the $\varSigma_{k}$'s with $k \geq 2$ have Kerov degree $k$, the expression above is already a $O(1)$, because it can be expanded as a finite combination of products of expectations of homogeneous observables $f$ rescaled by $n^{\deg_{K}(f)/2}$. So, one has to win just one order of magnitude. Let us introduce as in \cite{Sni06} and \cite{FM10} the \textbf{disjoint cumulants} and the \textbf{identity cumulants} of observables:
\begin{align*}k^{\bullet}(X_{1},\ldots,X_{r})&=\sum_{\pi \in \mathfrak{Q}(\lle 1,r\rre)} \mu(\pi)\,\prod_{\pi_{j} \in \pi} \esper\left[\prod_{i \in \pi_{j}}^{\bullet} X_{i}\right]\\
 k^{\id}(X_{1},\ldots,X_{r})&=\sum_{\pi \in \mathfrak{Q}(\lle 1,r\rre)} \mu(\pi)\,\prod_{\pi_{j} \in \pi}^{\bullet} \left[\prod_{i \in \pi_{j}} X_{i}\right]\end{align*}
where the disjoint product of observables $\bullet$ of diagrams is defined on the basis of central characters by $\varSigma_{\mu}\bullet \varSigma_{\nu}=\varSigma_{\mu \sqcup \nu}$. Since Kerov's degree is compatible with the usual product of observables and also with the disjoint product of observables, for any family of observables, 
$$k(X_{1},\ldots,X_{r}) =O\left(n^{\frac{\deg_{K}(X_{1})+\cdots+\deg_{K}(X_{r})}{2}}\right)\qquad;\qquad k^{\bullet}(X_{1},\ldots,X_{r}) =O\left(n^{\frac{\deg_{K}(X_{1})+\cdots+\deg_{K}(X_{r})}{2}}\right).$$ 
On the other hand, because of the commutativity of the following diagram of noncommutative probability spaces
$$\begin{diagram}
   \node{(\obs,\cdot)} \arrow{se,b}{\esper=\Gel_{n}}\arrow[2]{e,t}{\esper^\id=\id} \node[2]{(\obs,\bullet)}\arrow{sw,b}{\esper=\Gel_{n}}\\
\node[2]{\C}
  \end{diagram}
$$
the three kind of cumulants are related by the following formula:
$$k(X_{1},\ldots,X_{r})=\sum_{\pi \in \mathfrak{Q}(\lle 1,r\rre)} k^{\bullet}\left(k^{\id}(X_{i \in \pi_{1}}), \ldots, k^{\id}(X_{i \in \pi_{s}}) \right)$$
Therefore, to gain one order of magnitude in the estimate of $k(\varSigma_{l_{1}},\ldots,\varSigma_{l_{r}})$, one can use this expansion in disjoint cumulants and:
\begin{enumerate}
\item either bound the sum of the Kerov degrees of the identity cumulants by $l_{1}+\cdots+l_{r}-1$;
\item or, in case the sum of the Kerov degrees is maximal and equal to $l_{1}+\cdots+l_{r}$, bound the disjoint cumulant of the identity cumulants.
\end{enumerate}\bigskip

\begin{lemma}[Kerov degree of a disjoint cumulant of observables]\label{e}
Given integers $l_{1},l_{2},\ldots,l_{t}$ greater than $2$, if these integers are not all equal, then
$$\deg_{K}\left(k^{\id}(\varSigma_{l_{1}},\ldots,\varSigma_{l_{t}})\right)\leq l_{1}+\cdots+l_{t}-1\,.$$
\end{lemma}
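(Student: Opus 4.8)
The plan is to reduce everything to the top homogeneous component for Kerov's degree and then to exhibit a M\"obius cancellation. Since $\deg_{K}$ is a filtration for both the ordinary product and the disjoint product $\bullet$, every summand $\prod_{\pi_{j}}^{\bullet}[\prod_{i \in \pi_{j}}\varSigma_{l_{i}}]$ appearing in $k^{\id}(\varSigma_{l_{1}},\ldots,\varSigma_{l_{t}})$ has Kerov degree at most $l_{1}+\cdots+l_{t}$, so the bound $\deg_{K}(k^{\id})\leq \sum_{i}l_{i}$ holds automatically. It therefore suffices to show that, under the hypothesis of the lemma, the homogeneous component of degree exactly $\sum_{i}l_{i}$ vanishes; this is what I would establish.

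First I would describe the top component of an ordinary product $\prod_{i \in B}\varSigma_{l_{i}}$ of cyclic central characters over a block $B$. Working in the algebra of partial permutations, the same chain-and-contradiction argument as in Lemma \ref{m} applies: it never really uses that the cycles have equal length, only that an overlap surviving to maximal degree forces two supports to coincide, hence two lengths to be equal and the two cycles to be mutually inverse. This shows that the configurations of maximal degree $\sum_{i\in B}l_{i}$ are exactly those in which each cycle is either disjoint from all the others or paired with a unique other cycle of the \emph{same} length and opposite orientation. Tracking the coefficients as in Lemma \ref{m}, I obtain
$$\Big[\prod_{i \in B}\varSigma_{l_{i}}\Big]_{\mathrm{top}}=\sum_{M}\Big(\prod_{\{a,b\}\in M}l_{a}\Big)\,\varSigma_{\nu(M)},$$
the sum running over partial matchings $M$ of $B$ pairing only indices with equal $l$, where $\nu(M)$ replaces each matched pair of length $l$ by $l$ fixed points and keeps every unmatched $i$ as a part $l_{i}$.

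Next I would assemble the cumulant. Because $\bullet$ is \emph{exactly} additive for $\deg_{K}$ and satisfies $\varSigma_{\mu}\bullet\varSigma_{\nu}=\varSigma_{\mu\sqcup\nu}$, the top component of $\prod_{\pi_{j}}^{\bullet}[\cdots]$ is the disjoint product of the block tops; a family of within-block matchings is the same datum as a single matching $M$ all of whose pairs lie inside blocks of $\pi$ (write $M\prec\pi$), and both the coefficient $\prod_{\{a,b\}\in M}l_{a}$ and the type $\nu(M)$ depend on $M$ alone. Inserting the M\"obius weights $\mu(\pi)$ and exchanging the two summations gives
$$[k^{\id}]_{\mathrm{top}}=\sum_{M}\Big(\prod_{\{a,b\}\in M}l_{a}\Big)\,\varSigma_{\nu(M)}\,\Big(\sum_{\pi\,\geq\,\pi_{M}}\mu(\pi)\Big),$$
where $\pi_{M}$ is the set partition generated by the pairs of $M$. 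The defining relation of the M\"obius function of the partition lattice yields $\sum_{\pi\geq\pi_{M}}\mu(\pi)=\delta_{\pi_{M},\widehat{1}}$, and $\pi_{M}=\widehat{1}$ can hold only when $M$ merges everything into a single block, i.e.\ only when $t\leq 2$. Hence for $t\geq 3$ every inner sum vanishes, while for $t=2$ the sole surviving matching $M=\{\{1,2\}\}$ requires $l_{1}=l_{2}$ and is ruled out by the assumption that the $l_{i}$ are not all equal (the case $t=1$ being excluded for the same reason, as it would give the nonzero term $\varSigma_{l_{1}}$). In every admissible case the top component is $0$, which gives $\deg_{K}(k^{\id}(\varSigma_{l_{1}},\ldots,\varSigma_{l_{t}}))\leq l_{1}+\cdots+l_{t}-1$.

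The step I expect to be the real work is the first one: upgrading the top-degree classification of Lemma \ref{m} from a single power $(\varSigma_{k})^{m}$ to a product of cyclic characters of \emph{mixed} lengths, in particular checking that no cluster of three or more cycles can reach the maximal degree and that the factor attached to each inverse pair is exactly its common length. Everything past that is the formal M\"obius inversion above, whose only arithmetic ingredient is the identity $\sum_{\pi\geq\pi_{M}}\mu(\pi)=\delta_{\pi_{M},\widehat{1}}$.
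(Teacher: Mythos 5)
Your proof is correct, but it takes a genuinely different route from the paper's. The paper does not touch the M\"obius expansion at all: it invokes the combinatorial characterization (from \cite{FM10}) of $k^{\id}(\varSigma_{l_{1}},\ldots,\varSigma_{l_{t}})$ as the sum of products $c(A_{1})\cdots c(A_{t})$ over \emph{evercrossing} (connected) families of arrangements, reorders so that the cycles of the repeated length $l_{1}$ come first, and argues term by term: either that initial partial product already loses a degree, or it has cycle type $l_{1}^{u-2k}\,1^{l_{1}k}$, and connectedness forces some later cycle of a different length $l_{v}\notin\{l_{1},1\}$ to cross its support, which kills a degree by the ``no common part'' principle. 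You instead keep the full M\"obius sum, compute the degree-$L$ component of every summand via a mixed-length extension of Lemma \ref{m}, and let the weights cancel through $\sum_{\pi\geq\pi_{M}}\mu(\pi)=\delta_{\pi_{M},\widehat{1}}$. The trade-off: the paper's argument needs no cancellation (each surviving term is individually small) but imports the evercrossing description of identity cumulants; yours is self-contained and in fact proves a strictly stronger statement --- the bound also holds when all the $l_{i}$ are equal as soon as $t\geq 3$ (e.g.\ $k^{\id}(\varSigma_{k},\varSigma_{k},\varSigma_{k})$ has vanishing top component, as one checks directly from Lemma \ref{m}), a case the paper instead extracts from the moment computation in the first part of Theorem \ref{nolove}. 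The step you flag as the real work --- the classification of maximal-degree configurations for cycles of mixed lengths, with coefficient $l_{a}$ per inverse pair --- is genuine but is essentially Propositions 4.11--4.13 of \cite{IO02}, and the paper's own proof needs the same ingredient in the form of the ``distinct cycle types with intersecting supports drop the degree'' statement; your sketch of why the chain argument never uses equality of the lengths is accurate.
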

\begin{proof}
As before, we interpret the $\varSigma_{k}$'s as combinations of partial permutations, and we write formally:
$$\varSigma_{k}=\sum_{A \in \mathcal{A}(k)}c(A)$$
where $\mathcal{A}(k)$ is the set of $k$-arrangements of positive integers in $\N^{*}$, and $c(A)$ denotes the cyclic partial permutation associated to such an arrangement. If $A_{1},\ldots,A_{t}$ is a familly of arrangements of integers, we denote by $\pi(A_{1},\ldots,A_{t})$ the set partition of $\lle 1,t\rre$ associated to the equivalence relation that covers the relations $i \sim j \iff A_{i}\cap A_{j} \neq \emptyset$. Then, it has been shown in \cite{FM10} that identity cumulants of symbols $\varSigma_{l_{1}},\ldots,\varSigma_{l_{t}}$ can be given the following combinatorial interpretation:
$$k^{\id}(\varSigma_{l_{1}},\ldots,\varSigma_{l_{t}})= \sum_{\substack{\forall j,\,\,A_{j} \in \mathcal{A}(l_{j})\\ \pi(A_{1},\ldots,A_{t})=\{\lle 1,t \rre\}}} c(A_{1})\,c(A_{2})\,\cdots\,c(A_{t})$$
In other words, the arrangements appearing in the disjoint cumulant are bound to be ``evercrossing''. Now, since the $l_{i}$'s are not all equal, up to a permutation that does not modify the value of the joint cumulant, one can suppose that there is an index $u<t$ such that $l_{1}=l_{2}=\cdots=l_{u}$, and $l_{1}\neq l_{i}$ for all $i>u$. Then, let us consider a product of cycles $c(A_{1})\cdots c(A_{t})$ appearing in the sum above. If the partial product $c(A_{1})\cdots c(A_{u})$ has not maximal Kerov degree $u\,l_{1}$, then since Kerov's degree yields a filtration on the algebra of partial permutations, one has indeed
$$\deg_{K}(c(A_{1})\,\cdots\,c(A_{t}))\leq l_{1}+\cdots+l_{t}-1\,.$$
In the opposite case, because of the proof of Lemma \ref{m}, the cyclic type of the product of the $u$ first cycles is some $l_{1}^{u-2k}\,1^{l_{1}k}$. On the other hand, there is an index $v \in \lle u+1,t\rre$ such that the support of the cycle $c(A_{v})$ intersects the support of the products of the $u$ first cycles; we take the first such index. Notice that the length $l_{v}$ does not appear in the cycle type $l_{1}^{u-2k}\,1^{l_{1}k}$. But we have mentioned that if two partial permutations have cycle types without common part, then they have either disjoint supports, or a product with Kerov degree strictly less than the sum of the Kerov degrees. As the supports are supposed to have a non trivial intersection, we conclude that the partial product $c(A_{1})\,\cdots\,c(A_{v})$ has Kerov degree smaller than $l_{1}+\cdots+l_{v}-1$, and if one adds the remaining cycles, one obtains once again:
$$\deg_{K}(c(A_{1})\,\cdots\,c(A_{t}))\leq l_{1}+\cdots+l_{t}-1\,.$$
All the products of cycles involved in the sum satisfy then the inequality stated previously.
\end{proof}
\bigskip

\begin{lemma}[A M\"obius inversion formula]\label{l}
Let $F$ be any function on pairs of positive integers, and let $(1,\ldots,1,2,\ldots,2,\ldots,s,\ldots,s)$ be a sequence of $r$ integers with $r_{1}\geq 1$ integers $1$, $r_{2}\geq 1$ integers $2$, \emph{etc}. If $\pi \in \mathfrak{Q}(\lle 1,r\rre)$ is a set partition with parts $\pi_{1}\sqcup \pi_{2}\sqcup \cdots \sqcup \pi_{\ell(\pi)}$, we denote by $r_{ij}$ the number of integers $i$ that fall in $\pi_{j}$; in particular, $|\pi_{j}|=\sum_{i=1}^{s} r_{ij}$ and $r_{i}=\sum_{j=1}^{\ell(\pi)} r_{ij}$. Suppose that $s\geq 2$. Then, 
$$\sum_{\pi \in \mathfrak{Q}(\lle 1,r\rre)} (-1)^{\ell(\pi)-1}\,(\ell(\pi)-1)!\,\prod_{j=1}^{\ell(\pi)}\prod_{r_{ij}\geq 1} F(i,r_{ij})=0\,.$$
\end{lemma}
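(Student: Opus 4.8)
The plan is to prove this Möbius-type identity by exhibiting the left-hand side as a coefficient extraction from a suitable generating function, and then showing that this generating function is identically one. The key observation is that the sum is almost the formula for the multivariate cumulant of a product measure. Recall from the discussion preceding Lemma \ref{e} that
$$\sum_{\pi \in \mathfrak{Q}(\lle 1,r\rre)} (-1)^{\ell(\pi)-1}\,(\ell(\pi)-1)!\,\prod_{j=1}^{\ell(\pi)} G(\pi_{j})$$
is precisely the joint cumulant $k(X_{1},\ldots,X_{r})$ when one sets $\esper[\prod_{i\in S}X_{i}]=G(S)$ for every block $S$. Here the block-weight factorizes as $G(\pi_{j})=\prod_{r_{ij}\geq 1}F(i,r_{ij})$. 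The strategy is therefore to build random variables $X_{1},\ldots,X_{r}$ whose joint moments realize exactly this factorized block-weight, and then to argue that the cumulant must vanish because these variables split into at least two \emph{independent} families.

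\textbf{Choosing the random variables.} First I would introduce, for each distinct value $i\in\lle 1,s\rre$, an independent collection of i.i.d.\ auxiliary variables, and attach to each of the $r$ positions a variable $X_{a}$ depending only on which value $i$ labels position $a$. Concretely, the requirement is that for any subset $S\subseteq \lle 1,r\rre$ whose positions carry $r_{iS}$ copies of the value $i$, the mixed moment equals $\esper\!\left[\prod_{a\in S}X_{a}\right]=\prod_{r_{iS}\geq 1}F(i,r_{iS})$. The product structure over distinct values $i$ signals that the positions labelled by different values should be stochastically \emph{independent}: the moment factorizes across value-classes, so we want $(X_{a})_{a\text{ labelled }i}$ and $(X_{a})_{a\text{ labelled }i'}$ independent whenever $i\neq i'$. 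One clean way to realize the single-value factor $F(i,m)$ as an $m$-th joint moment of $m$ exchangeable variables is to pick, for each value $i$, a de Finetti-type representation, i.e.\ conditionally i.i.d.\ variables given a latent parameter; the existence of such a representation reproducing any prescribed sequence $F(i,1),F(i,2),\ldots$ of mixed moments is the only analytic input, and it can be arranged formally at the level of moment sequences without convergence concerns since everything below is a finite algebraic identity.

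\textbf{The vanishing argument.} With this setup, the crucial point is that the hypothesis $s\geq 2$ forces the family $X_{1},\ldots,X_{r}$ to partition into at least two mutually independent blocks, namely the value-class of $1$ (nonempty since $r_{1}\geq 1$) and the union of the remaining value-classes (nonempty since $s\geq 2$ and the corresponding $r_{i}\geq 1$). It is a standard and elementary property of joint cumulants that $k(X_{1},\ldots,X_{r})=0$ as soon as the variables can be split into two nonempty groups that are independent of each other. I would prove this directly from the Möbius definition: writing the cumulant as the $r$-th mixed derivative at $0$ of $\log\esper[\exp(\sum t_{a}X_{a})]$, independence makes the cumulant generating function additively split into a sum of two functions, each depending only on the $t_{a}$ in one group; the mixed partial derivative that touches variables in both groups therefore annihilates. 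Hence the whole sum is zero, which is exactly the claim.

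\textbf{Main obstacle.} The step I expect to be delicate is the realizability: guaranteeing that for \emph{arbitrary} prescribed values $F(i,m)$ there genuinely exist (exchangeable) random variables with those mixed moments, rather than merely formal moment symbols. Since $F$ is unconstrained, no positivity or boundedness is available, so a literal probabilistic model need not exist. I would circumvent this by working entirely formally: treat the $X_{a}$ as elements of a commutative algebra equipped with a \emph{linear functional} $\esper$ defined on monomials by the prescribed factorized moments, check that this functional is well-defined and multiplicative across value-classes (which is immediate from the factorized definition), and observe that the purely combinatorial cumulant-vanishing argument uses only linearity of $\esper$ and this multiplicativity, never any positivity. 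In this algebraic guise the independence-implies-zero-cumulant lemma is a finite identity in the set-partition lattice, and the restriction $s\geq 2$ is exactly what provides the nontrivial two-block split that kills the top Möbius sum.
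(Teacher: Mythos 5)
Your argument is correct, and it cannot really be compared to ``the paper's proof'' because the paper gives none: Lemma \ref{l} is stated and then only checked on the single example $r=4$, $s=2$, sequence $(1,1,2,2)$, so your proposal supplies a general argument where the text offers only a verification. Your route --- read the left-hand side as the joint cumulant of a purely formal moment functional $M(S)=\prod_{r_{iS}\geq 1}F(i,r_{iS})$, observe that $M$ factorizes over the value classes, and invoke the vanishing of mixed cumulants of ``independent'' families --- is sound, and you correctly identify and defuse the only real danger: since $F$ is arbitrary, no genuine random variables (de Finetti or otherwise) need exist, but positivity is never used, only linearity and the factorization of $M$ across value classes. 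The one step you should write out explicitly, to avoid circularity (the fact that independence kills mixed cumulants is itself usually proved by exactly this kind of M\"obius computation), is the formal version of the log-splitting. Set $Z(t)=\sum_{S\subseteq\lle 1,r\rre}M(S)\prod_{a\in S}t_{a}$ with $M(\emptyset)=1$. The factorization of $M$ over $A=\{a \text{ with label }1\}$ and $B=\{a\text{ with label in }\lle 2,s\rre\}$ gives $Z=Z_{A}\,Z_{B}$, where $Z_{A}$ and $Z_{B}$ depend on disjoint sets of variables and both $A$ and $B$ are nonempty because $r_{1}\geq 1$ and $s\geq 2$. On the one hand, expanding $\log\big(1+(Z-1)\big)$ and counting ordered set partitions shows that the coefficient of $t_{1}t_{2}\cdots t_{r}$ in $\log Z$ is exactly $\sum_{\pi}(-1)^{\ell(\pi)-1}(\ell(\pi)-1)!\prod_{j}\prod_{r_{ij}\geq 1}F(i,r_{ij})$; on the other hand this coefficient vanishes in $\log Z_{A}+\log Z_{B}$, since neither summand contains a monomial involving variables from both $A$ and $B$. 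With that short computation made explicit, your proof is complete and, as a bonus, it shows the hypothesis $r_{i}\geq 1$ for \emph{all} $i$ is not needed: two nonempty value classes suffice.
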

\begin{example}
Suppose that $r=4$. In that case, there are $15$ set partitions: $\{1,2,3,4\}$, $\{1,2,3\}\sqcup\{4\}$, $\{1,2,4\}\sqcup\{3\}$, $\{1,3,4\}\sqcup\{2\}$, $\{2,3,4\}\sqcup\{1\}$, $\{1,2\}\sqcup\{3,4\}$, $\{1,3\}\sqcup\{2,4\}$, $\{1,4\}\sqcup\{2,3\}$, $\{1,2\}\sqcup\{3\}\sqcup\{4\}$, $\{1,3\}\sqcup\{2\}\sqcup\{4\}$, $\{1,4\}\sqcup\{2\}\sqcup\{3\}$, $\{2,3\}\sqcup\{1\}\sqcup\{4\}$, $\{2,4\}\sqcup\{1\}\sqcup\{3\}$, $\{3,4\}\sqcup\{1\}\sqcup\{2\}$ et $\{1\}\sqcup\{2\}\sqcup\{3\}\sqcup\{4\}$. Suppose that $s=2$ and that the sequence is $(1,1,2,2)$. The corresponding sum is 
\begin{align*}
&F(1,2)\,F(2,2)-F(1,2)\,F(2,1)^{2}-F(1,2)\,F(2,1)^{2}-F(1,1)^{2}\,F(2,2)-F(1,1)^{2}\,F(2,2)\\
&-F(1,2)\,F(2,2)-F(1,1)^{2}\,F(2,1)^{2}-F(1,1)^{2}\,F(2,1)^{2}+2\,F(1,2)\,F(2,1)^{2}+2\,F(1,1)^{2}\,F(2,1)^{2}\\
&+2\,F(1,1)^{2}\,F(2,1)^{2}+2\,F(1,1)^{2}\,F(2,1)^{2}+2\,F(1,1)^{2}\,F(2,1)^{2}+2\,F(1,1)^{2}\,F(2,2)-6\,F(1,1)^{2}\,F(2,1)^{2}
\end{align*}
and is indeed equal to $0$.
\end{example}\bigskip

\begin{proof}[Proof of Theorem \ref{nolove}, second part]
We want to prove that $k(\varSigma_{l_{1}},\ldots,\varSigma_{l_{r}})=o(n^{l_{1}+\cdots+l_{r}/2})$ when $r\geq 3$. Let us denote $L=l_{1}+\cdots+l_{r}$. Because of the \emph{non-joint} convergence, the case when all the $l_{i}$'s are equal has already been proved; consequently, one can suppose that some of the $l_{i}$'s are distinct. We then use the expansion of the cumulant in terms of disjoint cumulants
$$k_{\pi}= k^{\bullet}\left(k^{\id}(\varSigma_{l_{i} \in \pi_{1}}), \ldots, k^{\id}(\varSigma_{l_{i} \in \pi_{s}}) \right)$$
labelled by set partitions. If some part $\pi_{j}$ contains distincts integers $l_{i}$'s, then because of lemma \ref{e}, the sum of the Kerov degrees of the observables involved in the disjoint cumulant $k_{\pi}$ is smaller than $L-1$, so $k_{\pi}=O(n^{(L-1)/2})$ is of smaller order of magnitude. Thus, we can focus on the remaining terms $k_{\pi}$, where $\pi$ is a set partition such that if $i_{1}$ and $i_{2}$ are in the same part of $\pi$, then $l_{i_{1}}=l_{i_{2}}$. In other words, we have to bound disjoint cumulants that look like
$$k^{\bullet}\left(k^{\id}(\varSigma_{m_{1}},\ldots,\varSigma_{m_{1}}),\ldots,k^{\id}(\varSigma_{m_{s}},\ldots,\varSigma_{m_{s}})\right).$$
If one of the identity cumulant $k^{\id}(\varSigma_{m_{j}},\ldots,\varSigma_{m_{j}})$ contains $n_{j}$ terms with $n_{j} \geq 3$, then the simple convergence of scaled central characters previously established ensures that  Kerov's degree of this identity cumulant is smaller than $m_{j}\,n_{j}-1$, so once again $k_{\pi}$ is a $O(n^{(L-1)/2})$. Thus, one can restrict again the set of summation and suppose that all the $n_{j}$'s are equal to $1$ or $2$. Of course, if $n_{j}=1$, then $k^{\id}(\varSigma_{m_{j}})=\varSigma_{m_{j}}$; on the other hand, because of Lemma \ref{m}, the top Kerov degree component of $k^{\id}(\varSigma_{m_{j}},\varSigma_{m_{j}})$ is $\varSigma_{1^{m_{j}}}$, and one can of course neglict the terms of smaller Kerov degree. In the end, up to a $O(n^{(L-1)/2})$, the joint cumulant we wanted to estimate is equal to a sum of disjoint cumulants
$$k^{\bullet}(\varSigma_{m_{1}},\ldots,\varSigma_{m_{u}},\varSigma_{1^{m_{u+1}}},\ldots,\varSigma_{1^{m_{u+v}}})$$
with $m_{1}+\cdots+m_{u}+2(m_{u+1}+\cdots+m_{u+v})=L$. Then, Lemma \ref{l} ensures that these disjoint cumulants are some $O(n^{(L-1)/2})$. Indeed, let us rewrite such a disjoint cumulant in the following way:
$$k^{\bullet}\left(\underbrace{\varSigma_{2},\ldots,\varSigma_{2}}_{r_{1}\text{ terms}},\ldots,\underbrace{\varSigma_{s+1},\ldots,\varSigma_{s+1}}_{r_{s}\text{ terms}},\underbrace{\varSigma_{1^{2}},\ldots,\varSigma_{1^{2}}}_{r_{s+1}\text{ terms}},\ldots,\underbrace{\varSigma_{1^{t+1}},\ldots,\varSigma_{1^{t+1}}}_{r_{s+t}\text{ terms}}\right)$$
with $2r_{1}+\cdots+(s+1)r_{s}+2(2r_{s+1}+\cdots+(t+1)r_{s+t})=L$. The hypothesis made at the beginning of the proof ensures that $s+t\geq 2$. Let us denote by $F$ the function defined by:
 $$F(i,r)=\begin{cases} f(i+1,r)\,n^{\frac{(i+1)r}{2}} &\text{if }i \in \lle 1,s\rre,\\
n^{(i+1-s) r}& \text{if }i \in \lle s+1,s+t \rre.
\end{cases}$$
Then, because of Theorem \ref{a} and the asymptotic expression of the expectations $\Gel_{n}[\varSigma_{\mu}]$, by using the M\"obius formula 
$$k(X_{1},\ldots,X_{r})=\sum_{\pi \in \mathfrak{Q}(\lle 1,r\rre)} (-1)^{\ell(\pi)-1}\,(\ell(\pi)-1)! \, \prod_{\pi_{j }\in \pi} \esper\left[\prod_{i \in \pi_{j}}X_{j}\right],$$ 
one sees that the disjoint cumulant $k_{\pi}$ has for term of degree $L/2$ in $n$:
$$\sum_{\pi \in \mathfrak{Q}(\lle 1, s+t\rre)}(-1)^{\ell(\pi)-1}\,(\ell(\pi)-1)! \, \prod_{j=1}^{\ell(\pi)}\, \prod_{r_{ij}\geq 1} F(i,r_{ij})$$
This is zero because of Lemma \ref{l}. Hence, all the $k_{\pi}$'s are $O(n^{(L-1)/2})$, and this ends the proof of the estimate. Then, one has the \emph{joint} convergence of the scaled central cyclic characters towards a gaussian vector, and the asymptotic independence will follow from the vanishing of the limiting covariances. But the proof above make use of the hypothesis $r\geq 3$ only in order to exclude the case when all the $l_{i}$'s are equal. So, if $l \neq m$, then the joint cumulant $k(\varSigma_{l},\varSigma_{m})$ is again of smaller order of magnitude $O(n^{\frac{l+m}{2}})$. So, the covariances of the scaled observables tend towards zero, and the asymptotic independence is finally established. 
\end{proof}
\bigskip

In \cite{IO02}, Ivanov and Olshanski used a totally different method in order to obtain the joint convergence, that relies on Hermite polynomials; at the opposite, our approach is purely combinatorial, and we hope that Lemmas \ref{e} and \ref{l} can be used in a general setting in order to precise the scope of application of Kerov's central limit theorem. Notice that since $\obs=\C[\varSigma_{1},\varSigma_{2},\ldots]$, Theorem \ref{nolove} describes in fact the asymptotic behaviour of any observable of diagrams. By switching to the other bases of $\obs$, we shall then understand the asymptotic behaviour of the shapes of the Young diagrams. 
\bigskip\bigskip

\section{Asymptotics of the shapes of the Young diagrams under the Gelfand measures}
To begin with, let us pick randomly a Young diagram under the Gelfand measure of size $n=500$. Notice that the easiest way to do this is to pick a random involution of size $n$, and then apply the RSK algorithm to this involution.
\figcap{\includegraphics{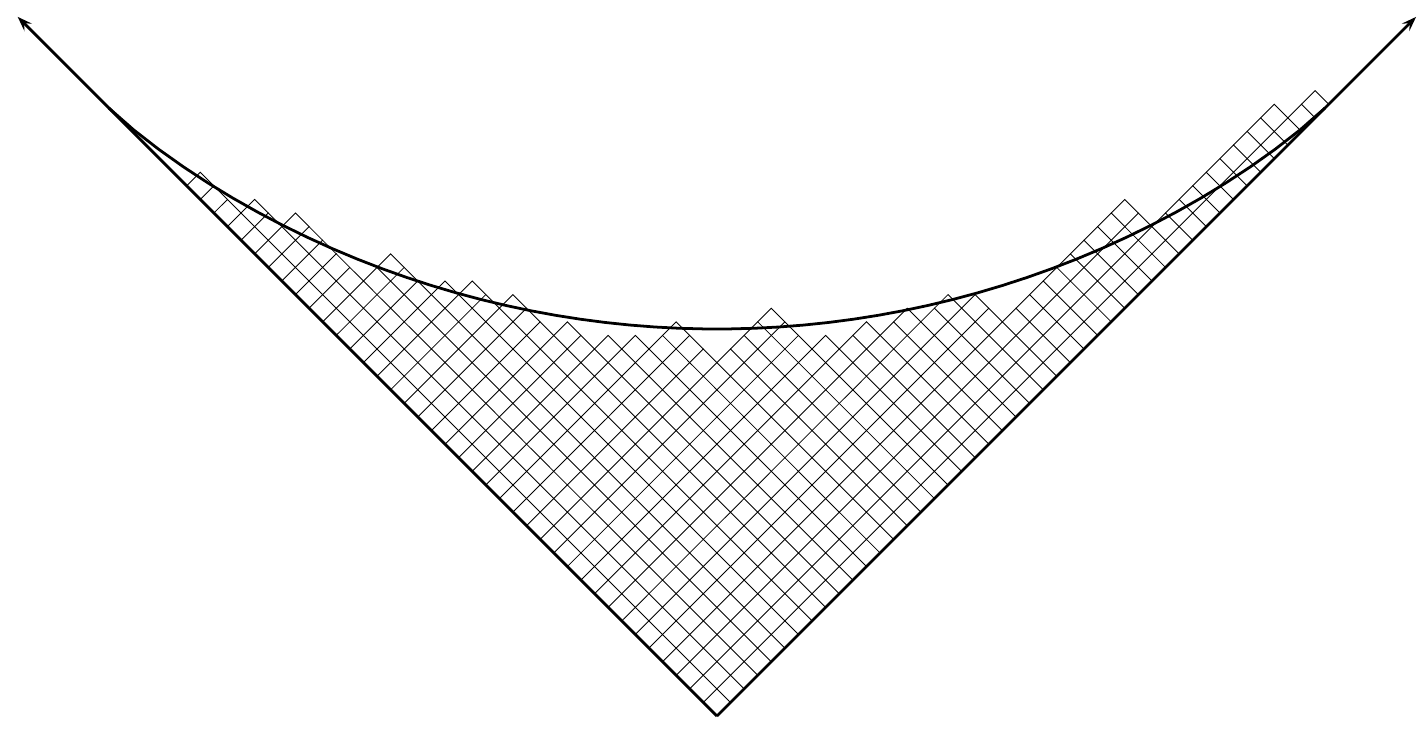}}{Young diagram of size $n=500$ picked randomly according to the Gelfand measure of the symmetric group.}
We see that the asymptotic behaviour seems to be exactly the same (at least at first order) as in the case of Plancherel measures. However, the fluctuations seem bigger\footnote{This may not be visible to the naked eye, and we shall see that the difference is only a factor $\sqrt{2}$. To highlight this phenomenon, we advise the reader to print the two figures and to color the areas between the diagrams and their limit shape. Then, it is clear that the area is bigger in the case of Gelfand measures; with the given figures, we can roughly estimate the areas to be of 20 small boxes in the case of the Plancherel measure, and of 40 small boxes in the case of the Gelfand measure.} (although we cannot exclude at this point the possibility that our sample exhibit non typical fluctuations). It is the purpose of this section to give rigourous proofs of these observations.\bigskip

\subsection{First order asymptotics and a variational argument}
The estimates on the expectations of the central characters implie the convergence in probability of the free cumulants:
\begin{theorem}[Asymptotics of the free cumulants of rescaled diagrams under the Gelfand measures]
Let $\lambda^{*}$ be the rescaled version of a random Young diagram picked randomly according to the probability measure $\Gel_{n}$. As $n$ goes to infinity,
$$R_{k}(\lambda^{*}) \to \begin{cases} 1&\text{if }k=2,\\
0&\text{otherwise}, \end{cases}$$
in probability.
\end{theorem}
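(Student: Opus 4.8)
The plan is to reduce everything to a second--moment estimate controlled by Kerov's degree, treating the case $k=2$ separately because there the free cumulant is deterministic.

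First I would use homogeneity: since $R_k$ is a homogeneous observable of weight $k$ and $\lambda^*=\lambda^n$, the scaling rule $f(\omega^t)=t^{-k/2}f(\omega)$ gives $R_k(\lambda^*)=n^{-k/2}\,R_k(\lambda)$. For $k=2$, the expansion $\varSigma_\mu=R_{\mu_1+1}\cdots R_{\mu_r+1}+(\text{lower weight})$ with $\mu=(1)$ reads $\varSigma_1=R_2+(\text{lower weight})$, where the correction has weight $<2$ and is therefore a constant; evaluating on the empty diagram fixes that constant to $0$, so $R_2=\varSigma_1$. As $\varSigma_1(\lambda)=n^{\downarrow 1}\,\chi^\lambda(1^n)=n$, we obtain $R_2(\lambda^*)=n^{-1}\cdot n=1$ for every partition, which is the claimed limit, holding deterministically.

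For $k\geq 3$ the key point is that $R_k$ has Kerov degree strictly below its weight. Taking $\mu=(k-1)$ in the same expansion gives $R_k=\varSigma_{k-1}+g$ with $\wt(g)<k$; since $\varSigma_{k-1}$ has no part equal to $1$ we have $\deg_{K}(\varSigma_{k-1})=k-1$, and $\deg_{K}(g)\leq\wt(g)\leq k-1$, whence $\deg_{K}(R_k)\leq k-1$. Because $\deg_{K}$ is a filtration of algebra, $\deg_{K}((R_k)^2)\leq 2k-2$, and the estimate $\Gel_{n}[f]=O(n^{\deg_{K}(f)/2})$ established in the proof of Theorem \ref{a} yields
$$\Gel_{n}\big[(R_k(\lambda^*))^2\big]=n^{-k}\,\Gel_{n}\big[(R_k(\lambda))^2\big]=n^{-k}\,O\!\left(n^{\,k-1}\right)=O(n^{-1})\longrightarrow 0.$$
Hence $R_k(\lambda^*)\to 0$ in $L^2$, and a fortiori in probability. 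Combining the two cases gives the stated convergence.

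I do not expect a substantial obstacle: the argument rests entirely on facts already in hand, namely $\deg_{K}\leq\wt$, the order estimate $\Gel_{n}[f]=O(n^{\deg_{K}(f)/2})$, and the identification of $\varSigma_{k-1}$ as the top--weight lift of $R_k$. The one delicate point is verifying $\deg_{K}(R_k)\leq k-1$ rather than $k$, that is, that the correction $g$ genuinely has weight at most $k-1$; this is precisely what the Ivanov--Olshanski expansion guarantees. Alternatively one could invoke Theorem \ref{nolove} directly, writing $R_k(\lambda^*)=n^{-1/2}\,(\varSigma_{k-1}/n^{(k-1)/2})-n^{-k/2}g$ and using that the rescaled central character is tight, but the second--moment route is shorter and avoids appealing to the distributional limit.
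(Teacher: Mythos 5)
Your proof is correct and follows essentially the same route as the paper: both rest on the expansion $R_{k}=\varSigma_{k-1}+(\text{observable of weight}\leq k-1)$, the estimate $\Gel_{n}[f]=O(n^{\deg_{K}(f)/2})$ (resp.\ $O(n^{\mathrm{wt}(f)/2})$), and a second-moment bound giving convergence in probability, with the case $k=2$ handled deterministically via $R_{2}=\varSigma_{1}=p_{2}/2$. The only cosmetic difference is that you first record $\deg_{K}(R_{k})\leq k-1$ and make a single $L^{2}$ estimate, whereas the paper bounds the $\varSigma_{k-1}$ part (by Kerov degree) and the lower-weight part (by weight) separately.
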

\begin{proof}
Recall that for any observable $f$, $\Gel_{n}[f]$ is a $O(n^{\mathrm{wt}(f)/2})$. Since $R_{k+1}=\varSigma_{k}+h$ with $\mathrm{wt}(h)\leq k$, $R_{k+1}(\lambda^{*})=n^{-\frac{k+1}{2}}\,R_{k+1}(\lambda)=n^{-\frac{k+1}{2}}\,\varSigma_{k}(\lambda)+n^{-\frac{k+1}{2}}\,h(\lambda)$, and the second term verifies
$$\Gel_{n}\left[\left(n^{-\frac{k+1}{2}}\,h(\lambda)\right)^{2}\right]=n^{-(k+1)}\,\,\Gel_{n}\left[h(\lambda)^{2}\right]=O(n^{-1})\to 0.$$
So, $R_{k+1}(\lambda^{*})=n^{-\frac{k+1}{2}}\,\varSigma_{k}(\lambda)+X_{k}$, where $X_{k}$ is a random variable that converges in probability towards zero. Suppose that $k \geq 2$. Then, $\varSigma_{k}$ and $(\varSigma_{k})^{2}$ are observables of Kerov degree $k$ and $2k$, so:
$$\Gel_{n}\left[n^{-\frac{k+1}{2}}\,\varSigma_{k}(\lambda)\right]=O(n^{-\frac{1}{2}})\to 0\qquad;\qquad \Gel_{n}\left[\left(n^{-\frac{k+1}{2}}\,\varSigma_{k}(\lambda)\right)^{2}\right]=O(n^{-1})\to 0 $$
As a consequence, if $k\geq 2$, then $n^{-\frac{k+1}{2}}\,\varSigma_{k}(\lambda)$ converges also in probability towards $0$, so $R_{k+1}(\lambda^{*}) \to 0$.  On the other hand, $R_{2}(\omega)=\varSigma_{1}(\omega)=\frac{p_{2}(\omega)}{2}$ for any continuous diagram $\omega$, so $R_{2}(\lambda^{*})=\frac{\mathcal{A}(\lambda)^{*}}{2}=1$ almost surely under any Gelfand measure. \end{proof}
\bigskip

Now, notice that the LSKV curve has for free cumulants $R_{2}(\Omega)=1$ and $R_{k\geq 3}(\Omega)=0$. Indeed, $\Omega'(s)=\frac{2}{\pi}\,\arcsin\frac{s}{2}$, $\Omega''(s)=\frac{2}{\pi}\,\frac{1}{\sqrt{4-s^{2}}}$, and from this we get
$$p_{k}(\Omega)=\begin{cases} \binom{k}{k/2}&\text{if }k \text{ is even},\\
0&\text{if }k\text{ is odd},
\end{cases}$$
see \cite[Proposition 5.3]{IO02}. It is then a lengthy but straightforward computation to see that $G_{\Omega}(z)=(z-\sqrt{z^{2}-4})/2$, see \emph{e.g.} \cite[\S3]{Mel10b}. As a consequence, $R_{\Omega}(z)=z+1/z$, and this gives us the free cumulants of $\Omega$. So, we have proved that under the Gelfand measures, the free cumulants of scaled Young diagrams converge in probability towards those of $\Omega$.\bigskip

There are various ways to deduce from this the convergence of the shapes; one can for instance\footnote{We want to get rid of the \emph{ad hoc} argument given in \cite[Lemmas 5.6 and 5.7]{IO02} for the Plancherel measures, and that relies on precise estimates of the distribution of the length of a longest increasing subsequence in a random permutation.} use the transition measures of the continuous Young diagrams. If $\omega$ is a continuous Young diagram, its generating function can be written as the Cauchy transform of a unique compactly supported probability measure on $\R$:
$$G_{\omega}(z)=\int_{\R} \frac{\mu_{\omega}(ds)}{z-s}$$
The measure $\mu_{\omega}$ is called the \textbf{transition measure} of $\omega$; its free cumulants are precisely the free cumulants of the Young diagram. In the case of the LSKV curve $\Omega$, $\mu_{\Omega}$ is the unique probability measure with vanishing free cumulants of order $k\geq 3$, and it is known from free probability that this measure is Wigner's semicircle law:
$$\mu_{\Omega}(ds)=\mathbb{1}_{s \in [-2,2]}\,\frac{\sqrt{4-s^{2}}}{2\pi}\,ds$$
The correspondence between continuous Young diagrams $\omega$, generating functions $G_{\omega}(z)$ and probability measures $\mu_{\omega}$ can be extended to homeomorphisms between the three following complete metric spaces:
\begin{enumerate}
\item The space $\mathscr{M}^{1}(\R)$ of probability measures on $\R$ with the topology of convergence in law.
\item The space $\mathscr{N}^{1}$ of analytic functions on the upper half-plane $\mathbb{H}=\{z \,\,|\,\,\Im(z)>0\}$ that satisfy $\Im(N(z))\leq 0$ for all $z\in \mathbb{H}$, and $\lim_{y \to \infty} \I y\,N(\I y)=1$. This space is endowed with a topology slightly stronger than the topology of uniform convergence on compact subsets of $\mathbb{H}$, in order to ensure the completeness of the space.
\item The space $\mathscr{Y}^{1}$ of generalized continuous Young diagrams, that is to say Lipschitz functions with constant $1$ such that
$$\int_{-\infty}^{-1}(1+\omega'(s))\,\frac{ds}{|s|}<\infty\qquad;\qquad \int_{1}^{+\infty}(1-\omega'(s))\,\frac{ds}{|s|}<\infty,$$
two such functions being identified if they differ by a constant. The adequate topology on this space is the topology of uniform convergence on compact subsets. When restricted to continous Young diagrams is the sense of \S\ref{n}, this topology becomes the topology of global uniform convergence.
\end{enumerate}
These homeomorphisms form the so-called \textbf{Markov-Krein correspondence}, see \cite{Ker98} for details. Now, we have seen that $R_{k}(\lambda^{*})$ converges in probability towards $R_{k}(\Omega)$ under the Gelfand measures, and since the free cumulants of measures are related to the usual moments by the combinatorics of noncrossing partitions (\emph{cf.} \cite{NS06}), we have also:
$$\forall k,\,\,\int_{\R}s^{k}\,\mu_{\lambda^{*}}(ds) \to \int_{\R}s^{k}\,\mu_{\Omega}(ds) $$
in probability. But Wigner's semicircle law is characterized by its moments, so in fact one has $\mu_{\lambda^{*}} \to \mu_{\Omega}$ in probability and for Skorohod's topology of convergence in law (\emph{cf.} \cite[Chapter 1]{Bil69} for details on this terminology). By Markov-Krein correspondence, this leads finally to the following result:
\begin{theorem}[First order asymptotics of shapes of Young diagrams under the Gelfand measures]\label{spacebound}
For the topology of uniform convergence, the rescaled shapes $\lambda^{*}$ under the Gelfand measures converge in probability towards the LSKV curve $\Omega$:
$$\forall \eps>0,\,\,\,\Gel_{n}[\|\lambda^{*}-\Omega \|_{\infty}\geq \eps] \to 0.$$
\end{theorem}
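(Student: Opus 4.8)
The starting point is the previous theorem, which provides, for every fixed $k$, the convergence in probability $R_{k}(\lambda^{*}) \to R_{k}(\Omega)$, the limiting free cumulants being those of the semicircle distribution ($R_{2}=1$ and $R_{k}=0$ for $k\geq 3$). The plan is to transport this information first to the transition measures $\mu_{\lambda^{*}}$, and then to the shapes $\lambda^{*}$ themselves via the Markov--Krein correspondence. Concretely, I would proceed in three steps: first, deduce the convergence in probability of every moment of $\mu_{\lambda^{*}}$; second, upgrade this to the convergence in law of $\mu_{\lambda^{*}}$ towards $\mu_{\Omega}$ in probability by a method-of-moments argument; third, apply the homeomorphism of the Markov--Krein correspondence to obtain the uniform convergence of $\lambda^{*}$ towards $\Omega$.

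For the first step, recall that the free cumulants $R_{j}(\omega)$ and the moments $m_{k}(\omega)=\int_{\R}s^{k}\,\mu_{\omega}(ds)$ of the transition measure are tied together by the combinatorics of noncrossing partitions: each $m_{k}$ is a fixed universal polynomial, with integer coefficients, in $R_{2},\ldots,R_{k}$. Since this is a continuous function of finitely many coordinates, the convergence in probability of $R_{2}(\lambda^{*}),\ldots,R_{k}(\lambda^{*})$ immediately yields $m_{k}(\lambda^{*}) \to m_{k}(\Omega)$ in probability for each fixed $k$, the limit being the $k$-th moment of Wigner's semicircle law.

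For the second step, I would use that $\mu_{\Omega}$ is compactly supported, hence \emph{moment-determinate}. The deterministic method of moments then states that convergence of all moments to those of a determinate measure forces convergence in law; to pass to the random setting I would argue along subsequences, extracting diagonally a further subsequence along which all moments converge almost surely, and concluding that the weak limit is almost surely $\mu_{\Omega}$. The exact identity $R_{2}(\lambda^{*})=1$ forces $\int_{\R}s^{2}\,\mu_{\lambda^{*}}(ds)=1$ deterministically, which gives a uniform tightness bound through Markov's inequality and legitimizes the extraction; as the limit is always the same, $\mu_{\lambda^{*}} \to \mu_{\Omega}$ in probability for the topology of convergence in law. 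This upgrade from moment convergence in probability to weak convergence in probability is the step where care is needed, and it is the main technical obstacle of the argument.

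The third step is then formal. The Markov--Krein correspondence realizes $\mu_{\omega}\mapsto \omega$ as a homeomorphism from $\mathscr{M}^{1}(\R)$ onto $\mathscr{Y}^{1}$, carrying the topology of convergence in law to the topology of uniform convergence, which restricts to global uniform convergence on genuine continuous Young diagrams. A continuous map preserves convergence in probability, so the convergence $\mu_{\lambda^{*}} \to \mu_{\Omega}$ obtained above becomes $\|\lambda^{*}-\Omega\|_{\infty}\to 0$ in probability, that is, $\Gel_{n}[\|\lambda^{*}-\Omega\|_{\infty}\geq \eps]\to 0$ for every $\eps>0$. Beyond the moment-determinacy of the semicircle law and the continuity of the Markov--Krein correspondence, both cited above, no further computation is required.
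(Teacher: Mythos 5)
Your proof follows exactly the route taken in the paper: free cumulants of $\lambda^{*}$ to moments of the transition measure $\mu_{\lambda^{*}}$ via the noncrossing-partition (moment--cumulant) relations, then convergence in law in probability via the moment-determinacy of the compactly supported semicircle law, and finally the Markov--Krein homeomorphism to transfer this to uniform convergence of the shapes. The only difference is that you make explicit the subsequence/tightness argument that the paper leaves implicit in the sentence ``Wigner's semicircle law is characterized by its moments''; the argument is correct.
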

\bigskip\bigskip

This theorem raises a legitimate question: is it really surprising to observe the same asymptotic behaviour for Gelfand measures as for Plancherel measures? The answer is no, if one looks at the original proof of the Logan-Shepp-Kerov-Vershik law of large numbers given in \cite{LS77}. Indeed, the dimension of the irreducible representation labelled by a partition $\lambda$ is given by the hook length formula
$$\dim \lambda=\frac{n!}{\prod_{(i,j) \in \lambda} h(i,j)},$$ 
and from this formula, Logan and Shepp constructed a functional $I$ on continuous Young diagrams such that $(\dim \lambda)^{2}$ has roughly the same behaviour as $\E^{-nI(\lambda^{*})}$. Moreover, the curve $\Omega$ is the unique minimizer of the functional $I$, so the Plancherel measure is concentrated on shapes close to $\Omega$, and decreases at exponential speed\footnote{As a consequence, there should be a principle of large deviations for the random partitions under the Plancherel (and possibly Gelfand) measures, but such a principle has never been enounced in a rigourous way.} outside any vicinity of $\Omega$. But of course, if $(\dim \lambda)^{2}\propto \E^{-nI(\lambda^{*})}$, then $(\dim \lambda) \propto \E^{-nI(\lambda^{*})/2}$, so the same argument should hold for Gelfand measures. Thus, one could have given a proof of Theorem \ref{spacebound} relying on this variational principle, but this would not have been sufficient for the study of second order asymptotics.\bigskip\bigskip

\subsection{Second order asymptotics and a central limit theorem for Gelfand measures} Finally, let us prove the analogue of Kerov's central limit theorem for the Gelfand measures. In the following, we denote by $X_{k}$ the random variable $\frac{\varSigma_{k}(\lambda)}{n^{k/2}}$, with by convention $X_{1}=0$; we have seen that $X_{k}$ converges in law towards a gaussian variable $\xi_{k}\sim \mathcal{N}(e_{k},2k)$. As in \cite{IO02} and \cite{Mel10b}, we also denote by $U_{k}(X)$ the $k$-th Chebyshev polynomial of the second kind, with the normalization:
$$U_{k}(2\cos \theta)=\frac{\sin (k+1)\theta}{\sin \theta}$$
The $U_{k}$'s satisfy the recurrence relations $U_{k+2}(X)=X\,U_{k+1}(X)-U_{k}(X)$, with $U_{0}(X)=1$ and $U_{1}(X)=X$; they are the orthogonal polynomials for the semicircle law on $[-2,2]$.
\begin{lemma}[Asymptotic distribution of the moments of the deviation]
We consider the $k$-th moment of the scaled deviation $\frac{\sqrt{n}}{2}(\lambda^{*}(s)-\Omega(s))$, that is to say the random variable 
$$\frac{\sqrt{n}}{2}\int_{\R}s^{k}\,(\lambda^{*}(s)-\Omega(s))\,ds.$$
This is the scaled observable of diagrams $\sqrt{n}\,\frac{p_{k+2}(\lambda^{*})-p_{k+2}(\Omega)}{(k+1)(k+2)}$, and it is equal to
$$\sum_{j=0}^{\lfloor \frac{k-1}{2}\rfloor} \frac{k!}{k+1-j!\,j!}\,X_{k+1-2j},$$
plus a random variable that converges in probability towards $0$ under the Gelfand measures.
\end{lemma}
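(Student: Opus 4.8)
The plan is to prove two things in turn: the exact identity rewriting the moment of the deviation as a scaled difference of interlacing moments, and then the asymptotic expansion in the variables $X_{k}=\varSigma_{k}(\lambda)/n^{k/2}$.

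For the identity, I would work with the charge $\sigma_{\omega}(s)=(\omega(s)-|s|)/2$, which has compact support for any continuous Young diagram, so that $\lambda^{*}(s)-\Omega(s)=2\,(\sigma_{\lambda^{*}}(s)-\sigma_{\Omega}(s))$ and the integral is over a compact set. Thus the moment equals $\sqrt{n}\int_{\R}s^{k}\,(\sigma_{\lambda^{*}}(s)-\sigma_{\Omega}(s))\,ds$, and it suffices to relate $\int_{\R}s^{k}\,\sigma_{\omega}(s)\,ds$ to $p_{k+2}(\omega)=\int_{\R}\sigma_{\omega}''(s)\,s^{k+2}\,ds$. Integrating by parts twice and using that $\sigma_{\omega}$ and $\sigma_{\omega}'$ vanish outside a compact set (so every boundary term disappears), one obtains $p_{k+2}(\omega)=(k+1)(k+2)\int_{\R}s^{k}\,\sigma_{\omega}(s)\,ds$. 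This yields at once the stated equality with the scaled observable $\sqrt{n}\,\frac{p_{k+2}(\lambda^{*})-p_{k+2}(\Omega)}{(k+1)(k+2)}$.

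For the asymptotic part, I would use that $p_{k+2}$ is homogeneous of weight $k+2$, so that $p_{k+2}(\lambda^{*})=p_{k+2}(\lambda^{n})=n^{-(k+2)/2}\,p_{k+2}(\lambda)$, and then expand $p_{k+2}(\lambda)$ via the decomposition of $p_{k+2}$ into central characters recalled in the excerpt (with $k$ replaced by $k+2$). Writing $(k+2)^{\downarrow j+1}=(k+2)!/(k+1-j)!$, substituting $\varSigma_{1}(\lambda)=n$ and $\varSigma_{\ell}(\lambda)=n^{\ell/2}X_{\ell}$, the crucial observation is that each main summand carries the \emph{same} power of $n$:
$$\varSigma_{k+1-2j}(\lambda)\,(\varSigma_{1}(\lambda))^{j}=n^{(k+1-2j)/2}\,n^{j}\,X_{k+1-2j}=n^{(k+1)/2}\,X_{k+1-2j}.$$
After multiplying by $\sqrt{n}\,n^{-(k+2)/2}/((k+1)(k+2))=n^{-(k+1)/2}/((k+1)(k+2))$, the power of $n$ cancels, and using $\tfrac{(k+2)!}{(k+1)(k+2)}=k!$ the coefficient of $X_{k+1-2j}$ becomes exactly $\frac{k!}{(k+1-j)!\,j!}$; since the indices $k+1-2j$ range over integers $\geq 2$, this is precisely the announced sum.

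It then remains to dispose of the two remaining groups of terms, and this is where the only genuine (non-bookkeeping) points lie. When $k$ is even the decomposition has an extra term of top Kerov degree $\binom{k+2}{(k+2)/2}(\varSigma_{1})^{(k+2)/2}=\binom{k+2}{(k+2)/2}\,n^{(k+2)/2}$, which after the rescaling by $n^{-(k+2)/2}$ contributes exactly $\binom{k+2}{(k+2)/2}=p_{k+2}(\Omega)$ to $p_{k+2}(\lambda^{*})$; it is therefore cancelled precisely by the subtraction of $p_{k+2}(\Omega)$ (and for $k$ odd both quantities vanish). The main obstacle is exactly this cancellation: one must check that the diverging deterministic $\sqrt{n}$-term is killed by $p_{k+2}(\Omega)$, so that $p_{k+2}(\lambda^{*})-p_{k+2}(\Omega)$ starts only at the genuinely fluctuating order and produces an $O(1)$ limit rather than a divergence. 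Finally, the remainder $r$ has Kerov degree at most $k-1$, so $r^{2}$ has Kerov degree at most $2(k-1)$, and the estimate $\Gel_{n}[f]=O(n^{\deg_{K}(f)/2})$ from Theorem \ref{a} gives $\Gel_{n}[r^{2}]=O(n^{k-1})$; hence the rescaled remainder $\frac{n^{-(k+1)/2}}{(k+1)(k+2)}\,r(\lambda)$ has second moment $O(n^{-2})$ and tends to $0$ in $L^{2}$, hence in probability, which justifies the ``plus a random variable converging to $0$'' clause.
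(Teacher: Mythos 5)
Your proof is correct and follows essentially the same route as the paper: the identity via the charges $\sigma_\omega$ (which the paper outsources to \cite[Proposition 7.2]{IO02} and you instead verify by a double integration by parts), then the expansion of $p_{k+2}$ in central characters, the cancellation of the deterministic $\binom{k+2}{(k+2)/2}$ term against $p_{k+2}(\Omega)$, and the Kerov-degree bound on the remainder. Your treatment of the remainder by a second-moment estimate is in fact slightly more careful than the paper's, which only asserts that the term is ``negligible at infinity''.
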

\begin{proof}
The first part of the lemma is Proposition 7.2 in \cite{IO02}, and it is obvious if one writes $\lambda^{*}(s)-\Omega(s)=(\lambda^{*}(s)-|s|)-(\Omega(s)-|s|)$ in order to make appear the moments of the charges of the continuous diagrams $\lambda^{*}$ and $\Omega$. Then, one uses the expansion of $p_{k+2}$ given at the end of the section \ref{obs}:
$$p_{k+2}(\lambda^{*})=\frac{p_{k+2}(\lambda)}{n^{\frac{k+2}{2}}}=\sum_{j=0}^{\lfloor \frac{k-1}{2}\rfloor} \frac{(k+2)^{\downarrow j+1}}{j!}\,\frac{\varSigma_{k+1-2j}(\lambda)}{n^{\frac{k+2-2j}{2}}}+\begin{cases} \binom{k+2}{(k+2)/2}&\text{if }k\text{ is even},\\0 &\text{if }k\text{ is odd},\end{cases}$$
plus $n^{-\frac{k+2}{2}}$ times an observable of Kerov degree smaller than $k$. By subtracting $p_{k+2}(\Omega)$, one gets rid of the second term of the expansion, and by multiplying by $\frac{\sqrt{n}}{(k+1)(k+2)}$, one obtains:
$$\sqrt{n}\,\,\frac{p_{k+2}(\lambda^{*})-p_{k+2}(\Omega)}{(k+1)(k+2)}=\sum_{j=0}^{\lfloor \frac{k-1}{2}\rfloor} \frac{k!}{k+1-j!\,j!}\,X_{k+1-2j}$$
plus $n^{-\frac{k+1}{2}}$ times an observable of Kerov degree smaller than $k$. Since $n^{\frac{k}{2}}$ is the order of magnitude of such an observable, it is indeed negligible at infinity, whence the result.
\end{proof}\bigskip

\begin{proposition}[Asymptotic distribution of the linear functionals of the Chebyshev polynomials]
For $k \geq 2$, we consider the random variables
$$ \Upsilon_{k}=\frac{\sqrt{n}}{2}\int_{\R} U_{k}(s)\,(\lambda^{*}(s)-\Omega(s))\,ds.$$
As $n$ goes to infinity, the difference $\Upsilon_{k}-\frac{X_{k+1}}{k+1}$ goes to zero in probability. Consequently, the $\Upsilon_{k}$'s converge in law towards independent gaussian variables of laws $\mathcal{N}(\frac{e_{k+1}}{k+1},\frac{2}{k+1})$. 
\end{proposition}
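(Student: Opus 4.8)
The plan is to reduce the statement to the preceding lemma and to Theorem \ref{nolove}. First I would observe that, since $U_{k}$ is monic of degree $k$ and of the same parity as $k$ (from the normalization $U_{k}(2\cos\theta)=\sin(k+1)\theta/\sin\theta$, the substitution $s=2\cos\theta$ makes the leading coefficient equal to $1$), the functional $\Upsilon_{k}$ is a fixed linear combination
$$\Upsilon_{k}=\sum_{\substack{0\leq j\leq k\\ j\equiv k\,(2)}} c_{k,j}\,\frac{\sqrt{n}}{2}\int_{\R}s^{j}\,(\lambda^{*}(s)-\Omega(s))\,ds,$$
where $c_{k,j}$ is the coefficient of $s^{j}$ in $U_{k}$. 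Each of these moment functionals is controlled by the preceding lemma, which expresses it as an explicit linear combination of the variables $X_{j+1},X_{j-1},X_{j-3},\ldots$ plus a random variable tending to $0$ in probability. Substituting, $\Upsilon_{k}$ becomes a finite linear combination $\sum_{i\geq 0}a_{k,i}\,X_{k+1-2i}$ of the scaled central characters, up to an $o_{\proba}(1)$, with coefficients $a_{k,i}$ that do not depend on the chosen measure on $\Part_{n}$.

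The heart of the matter is then the algebraic identity $a_{k,0}=\frac{1}{k+1}$ and $a_{k,i}=0$ for $i\geq 1$. The value $a_{k,0}=1/(k+1)$ is immediate: only the leading term $s^{k}$ of $U_{k}$ can produce the top index $X_{k+1}$, and by the preceding lemma the coefficient of $X_{k+1}$ in $\frac{\sqrt{n}}{2}\int_{\R}s^{k}(\lambda^{*}-\Omega)\,ds$ is $\frac{k!}{(k+1)!\,0!}=\frac{1}{k+1}$. The vanishing of the lower coefficients $a_{k,i}$, $i\geq 1$, is precisely the combinatorial identity on Chebyshev coefficients underlying Kerov's central limit theorem in the Plancherel setting; conceptually it reflects the fact that the $U_{k}$ are orthogonal for the semicircle law $\mu_{\Omega}$, so that $\Upsilon_{k}$ isolates the single fluctuation mode $X_{k+1}$. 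I would simply invoke the computation of \cite[\S7]{IO02}: it is an identity between the two bases of $\obs$ given by interlacing moments and by central characters, hence independent of the random model. The only measure-dependent ingredient is the control of the remainders, which all have strictly smaller weight (resp.\ Kerov degree) than the leading part; by Theorem \ref{a} one has $\Gel_{n}[f]=O(n^{\deg_{K}(f)/2})$ and likewise for second moments, so these remainders go to $0$ in probability under $\Gel_{n}$. This yields $\Upsilon_{k}-\frac{X_{k+1}}{k+1}\to 0$ in probability, which I expect to be the delicate point, essentially a matter of careful bookkeeping since the key cancellation is borrowed from \cite{IO02}.

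For the second assertion I would invoke Theorem \ref{nolove}: the family $(X_{k+1})_{k\geq 2}=(\varSigma_{k+1}/n^{(k+1)/2})_{k\geq 2}$ converges jointly in law towards independent gaussian variables $\xi_{k+1}\sim\mathcal{N}(e_{k+1},2(k+1))$. Dividing by $k+1$ scales these into independent variables $\xi_{k+1}/(k+1)\sim\mathcal{N}\!\big(\tfrac{e_{k+1}}{k+1},\tfrac{2}{k+1}\big)$, the variance being $2(k+1)/(k+1)^{2}=2/(k+1)$. Finally, since each difference $\Upsilon_{k}-X_{k+1}/(k+1)$ tends to $0$ in probability, a joint application of Slutsky's lemma transfers both the limit law and the asymptotic independence from the vector $(X_{k+1}/(k+1))_{k}$ to the vector $(\Upsilon_{k})_{k}$, which gives exactly the convergence towards independent $\mathcal{N}\!\big(\tfrac{e_{k+1}}{k+1},\tfrac{2}{k+1}\big)$ variables claimed in the statement.
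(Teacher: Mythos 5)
Your proposal is correct and follows essentially the same route as the paper: expand $U_{k}$ in monomials, apply the preceding lemma to each moment functional, read off the coefficient $\frac{1}{k+1}$ of $X_{k+1}$ from the leading term, and defer the cancellation of the subleading $X_{k+1-2i}$'s to the combinatorial identity of Ivanov and Olshanski (Proposition 7.4 of \cite{IO02}), exactly as the paper does. The only cosmetic differences are your explicit appeal to Slutsky's lemma and the orthogonality heuristic explaining the cancellation, both of which are implicit in the paper's argument.
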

\begin{proof}
As in the proof of Theorem 10 in \cite{Mel10b}, we use the explicit expansion of Chebyshev polynomials:
$$U_{k}(X)=\sum_{m=0}^{\lfloor\frac{k}{2}\rfloor} (-1)^{m}\,\binom{k-m}{m}\,X^{k-2m}$$
Let us denote by $\Theta_{k}$ the scaled observable of the previous lemma; when $n$ goes to infinity, one can replace $\Theta_{k}$ by $\sum_{j=0}^{\lfloor \frac{k-1}{2}\rfloor} \frac{k!}{k+1-j!\,j!}\,X_{k+1-2j}$, and in fact the sum can be taken up to the index $\lfloor \frac{k}{2} \rfloor$, because $X_{1}=0$. Hence:
$$\Upsilon_{k}= \sum_{m=0}^{\lfloor\frac{k}{2}\rfloor} (-1)^{m}\,\binom{k-m}{m}\,\Theta_{k-2m}\qquad;\qquad \Theta_{k} \simeq \frac{1}{k+1}\sum_{j=0}^{\lfloor \frac{k}{2}\rfloor} \binom{k+1}{j}\,X_{k+1-2j}$$
The exact same argument as in \cite[Proposition 7.4]{IO02} and \cite[Theorem 10]{Mel10b} permits then to conclude that $\Upsilon_{k}\simeq \frac{X_{k+1}}{k+1}$, and Theorem \ref{nolove} gives the asymptotics of these variables.\end{proof}\bigskip

Now, any function $f \in \mathscr{C}^{\infty}([-2,2])$ can be expanded in the basis of Chebyshev polynomials, because this is an orthonormal basis of $\mathscr{L}^{2}([-2,2],\mu_{\Omega})$. So, let us write 
$$f(s)=\sum_{k=0}^{\infty} \left(\int_{-2}^{2}f(s)\,U_{k}(s)\, \frac{\sqrt{4-s^{2}}}{2\pi}\,ds\right)\,U_{k}(s).$$
Because of the previous proposition, the linear functional of the function $f$ associated to the scaled deviation $\Delta_{n,\Gel}(s)=\frac{\sqrt{n}}{2}(\lambda^{*}(s)-\Omega(s))$ is asymptotically equal to:
$$\int_{-2}^{2}f(s)\,\left(\sum_{k=1}^{\infty} \frac{\xi_{k+1}}{k+1}\,U_{k}(s)\, \frac{\sqrt{4-s^{2}}}{2\pi}\right)\,ds$$
Hence, as a distribution, $\Delta_{n,\Gel}(s)$ converges weakly in law towards the generalized gaussian process 
$$\Delta_{\infty,\Gel}(s)= \frac{1}{\pi}\sum_{k=1}^{\infty}\frac{\xi_{k+1}}{k+1}\,U_{k}(s)\,\sqrt{4-s^{2}}=\frac{2}{\pi}\sum_{k=2}^{\infty}\frac{\xi_{k}}{k}\,\sin k\theta,\qquad\text{with }s=2\cos\theta,\,\,\theta \in [0,\pi].$$
We write $\xi_{k}=e_{k}+\sqrt{2k}\,\zeta_{k}$, where the $\zeta_{k}$'s are  independent standard gaussian variables. It gives two contributions to $\Delta_{\infty,\Gel}(s)$:
$$\frac{2}{\pi}\sum_{k=1}^{\infty} \frac{\sin(2k+1)\theta}{2k+1}= \frac{1}{2}-\frac{2\sin\theta}{\pi} \qquad\text{and}\qquad \frac{2\sqrt{2}}{\pi}\sum_{k=2}^{\infty}\frac{\zeta_{k}}{\sqrt{k}}\,\sin k\theta = \sqrt{2}\,\Delta_{\infty,\proba}(2\cos \theta).$$
We can finally conclude:
\begin{theorem}[Second order asymptotics of shapes of Young diagrams under the Gelfand measures]
In the sense of distributions on $[-2,2]$, the scaled deviation $\sqrt{n}\,(\lambda^{*}(s)-\Omega(s))$ of the shapes of Young diagrams from the LSKV curve converges in law towards the generalized gaussian process
$$\Delta_{\infty,\Gel}(s)=\Delta_{\infty,\Gel}(2\cos \theta)=\frac{1}{2}-\frac{2\sin\theta}{\pi}+\sqrt{2}\,\Delta_{\infty,\proba}(2\cos \theta)\,$$
where $\Delta_{\infty,\proba}(2\cos \theta)=\frac{2}{\pi}\sum_{k=2}^{\infty}\frac{\zeta_{k}}{\sqrt{k}}\,\sin k\theta$ is the gaussian process already involved in Kerov's central limit theorem.
\end{theorem}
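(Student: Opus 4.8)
The plan is to test the scaled deviation against an arbitrary smooth function and thereby reduce everything to the joint convergence of the variables $\Upsilon_{k}$ already established in the preceding proposition. Since the Chebyshev polynomials $U_{k}$ form an orthonormal basis of $\mathscr{L}^{2}([-2,2],\mu_{\Omega})$, any $f\in\mathscr{C}^{\infty}([-2,2])$ expands as $f=\sum_{k\geq 0}\widehat{f}_{k}\,U_{k}$ with coefficients $\widehat{f}_{k}=\int_{-2}^{2}f(s)\,U_{k}(s)\,\frac{\sqrt{4-s^{2}}}{2\pi}\,ds$ that decay faster than any power of $k$. By linearity of the pairing, and because $\sqrt{n}\,(\lambda^{*}(s)-\Omega(s))$ is exactly twice the quantity $\frac{\sqrt{n}}{2}(\lambda^{*}(s)-\Omega(s))$ used to define the $\Upsilon_{k}$, I would write
$$\int_{-2}^{2}f(s)\,\sqrt{n}\,(\lambda^{*}(s)-\Omega(s))\,ds = 2\sum_{k\geq 0}\widehat{f}_{k}\,\Upsilon_{k}\,.$$
The previous proposition gives the joint convergence in law of the $\Upsilon_{k}$ towards independent gaussians $\xi_{k+1}/(k+1)$, so any finite truncation of this sum converges to $2\sum_{k}\widehat{f}_{k}\,\xi_{k+1}/(k+1)$, which is precisely $\int_{-2}^{2}f(s)\,\Delta_{\infty,\Gel}(s)\,ds$ for the random density $\Delta_{\infty,\Gel}(s)=\frac{1}{\pi}\sum_{k\geq 1}\frac{\xi_{k+1}}{k+1}\,U_{k}(s)\,\sqrt{4-s^{2}}$.

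Next I would rewrite this limiting process in the angular variable. Setting $s=2\cos\theta$ with $\theta\in(0,\pi)$ one has $\sqrt{4-s^{2}}=2\sin\theta$ and $U_{k}(2\cos\theta)=\sin((k+1)\theta)/\sin\theta$, so that $U_{k}(s)\sqrt{4-s^{2}}=2\sin((k+1)\theta)$ and, after reindexing $j=k+1$,
$$\Delta_{\infty,\Gel}(2\cos\theta)=\frac{2}{\pi}\sum_{j\geq 2}\frac{\xi_{j}}{j}\,\sin j\theta\,.$$
Decomposing $\xi_{j}=e_{j}+\sqrt{2j}\,\zeta_{j}$ with the $\zeta_{j}$ independent standard normal splits this into a deterministic drift and a fluctuation. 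Since $e_{j}=1$ for odd $j$ and $0$ for even $j$, the drift is the odd Fourier series $\frac{2}{\pi}\sum_{k\geq 1}\frac{\sin(2k+1)\theta}{2k+1}$; using the classical identity $\sum_{k\geq 0}\frac{\sin(2k+1)\theta}{2k+1}=\frac{\pi}{4}$ on $(0,\pi)$ and removing the $k=0$ term yields $\frac{1}{2}-\frac{2\sin\theta}{\pi}$. The fluctuation is $\frac{2\sqrt{2}}{\pi}\sum_{j\geq 2}\frac{\zeta_{j}}{\sqrt{j}}\,\sin j\theta=\sqrt{2}\,\Delta_{\infty,\proba}(2\cos\theta)$, which is exactly $\sqrt{2}$ times the process appearing in Kerov's central limit theorem. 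Assembling the two pieces gives the stated formula; I would emphasize that the non-centering of the limit, hence the drift term, comes precisely from the nonzero means $e_{j}=1$ attached to the odd cyclic characters, which is the genuinely new feature compared with the Plancherel case.

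The hard part will be to justify the convergence \emph{in the sense of distributions}, that is, to interchange the limit $n\to\infty$ with the infinite Chebyshev sum. For a fixed smooth test function this amounts to showing that the tail $2\sum_{k>N}\widehat{f}_{k}\,\Upsilon_{k}$ is uniformly small in $n$, in probability, as $N\to\infty$. This follows from the rapid decay of the $\widehat{f}_{k}$ together with a uniform-in-$n$ control of the $\Upsilon_{k}$: writing each $\Upsilon_{k}$ via the previous lemma as a fixed linear combination of the $X_{m}=\varSigma_{m}/n^{m/2}$, the estimate $\Gel_{n}[f]=O(n^{\deg_{K}(f)/2})$ of Theorem \ref{a} gives $\Gel_{n}[(\varSigma_{m})^{2}]=O(n^{m})$ and hence $\Gel_{n}[X_{m}^{2}]=O(1)$, so that $\|\Upsilon_{k}\|_{\mathscr{L}^{2}}$ is bounded uniformly in $n$ by a quantity growing at most polynomially in $k$. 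Since $\sum_{k}|\widehat{f}_{k}|\,\|\Upsilon_{k}\|_{\mathscr{L}^{2}}$ then converges uniformly in $n$, the tail control is immediate and each finite linear functional upgrades to the asserted weak convergence of the whole generalized process. This is the same tightness mechanism as in the Plancherel case treated in \cite{IO02}, the only new inputs being the shifted means and the factor $\sqrt{2}$ produced by Theorem \ref{nolove}.
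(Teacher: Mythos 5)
Your proposal is correct and follows essentially the same route as the paper: expansion of the test function in the Chebyshev basis, reduction to the proposition on the $\Upsilon_{k}$, passage to the angular variable, and the decomposition $\xi_{j}=e_{j}+\sqrt{2j}\,\zeta_{j}$ separating the drift $\frac{1}{2}-\frac{2\sin\theta}{\pi}$ from the fluctuation $\sqrt{2}\,\Delta_{\infty,\proba}$. The only difference is that you spell out the uniform-in-$n$ tail control needed to pass from finitely many Chebyshev modes to the full distributional limit, a point the paper leaves implicit.
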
 
\noindent The factor $\sqrt{2}$ explains why the fluctuations of random partitions under Gelfand measures seemed slightly bigger than under Plancherel measures; moreover, there is a constant term  $\frac{1}{2} - \frac{2\sin\theta}{\pi}$ in the asymptotic scaled deviation. It is quite a striking result that Kerov's generalized gaussian process $\Delta_{\infty}$ describes again the deviations of the shapes of the random partitions under Gelfand measures (we've already seen that it was the case for Schur-Weyl measures of parameter $\alpha=1/2$); and we could not have guessed or shown that without using our method of noncommutative moments. \bigskip
\bigskip

To conclude with, let us evoke some results due to J. Baik and E. Rains (\emph{cf.} \cite{BR01}). An integer $n$ being fixed, one can construct a one-parameter family of probability measures on $\Part_{n}$ by setting:
$$M_{n,\beta}(\lambda)=\frac{(\dim \lambda)^{\beta}}{\sum_{\mu\in \Part_{n}} (\dim \mu)^{\beta}},\quad\beta>0.$$
These measures are called \textbf{$\mathbf{\beta}$-Plancherel measures}; one recovers the usual Plancherel measure on $\Part_{n}$ when $\beta=2$, and the Gelfand measure when $\beta=1$. An asymptotic expression of the denominator $\sum_{\mu\in \Part_{n}} (\dim \mu)^{\beta}$ is not known in the general case, although one can relate partial sums over partitions with fixed length to some matrix integrals, see \cite[Formula 1.1]{BR01}. Nethertheless, because of the variational argument, it is not difficult to guess that all the $\beta$-Plancherel measures exhibit the same first-order asymptotics (with the LSKV curve as a limit).\bigskip

That said, one can conjecture that at the second order, Kerov's central limit theorem holds for all these measures, up to a multiplicative coefficient (maybe $\sqrt{2/\beta}$), and also up to an additional constant deviation $f_{\beta}(s)$ that is equal to $0$ when $\beta=2$, and to $\frac{1}{2}-\frac{\sqrt{4-s^{2}}}{\pi}$ when $\beta=1$. It is also conjectured that these probability measures on partitions are the good discrete analogues of the beta ensembles coming from random matrix theory (see \emph{e.g.} \cite{DE02}) --- they are random point processes with density
$$d\proba[x_{1},\ldots,x_{n}]=\frac{1}{Z_{n,\beta}}\,|\Delta(x_{1},\ldots,x_{n})|^{\beta}\,\E^{-\frac{1}{2}\sum_{i=1}^{n}(x_{i})^{2}}\,dx_{1}\cdots dx_{n}.$$
In particular, it has been shown in \cite{BR01} that the Baik-Deift-Johansson (\cite{BDJ99}, \cite{BDJ00}, \cite{Oko00}) correspondence relating the asymptotic size of the largest eigenvalues of a random matrix of the GUE ($\beta=2$) to the asymptotic size of the largest parts of a random partition under the Plancherel measures also holds in the case $\beta=1$, that is to say if one replaces the GUE by the GOE and the Plancherel measure by the Gelfand measure. An interesting problem would be to establish analoguous results for generic values of $\beta$. It would also be interesting to relate the second order asymptotics of the $\beta$-Plancherel measures to those of empirical measures of the $\beta$-ensembles, \emph{i.e.}, to their deviations from Wigner's semicircle law.
\bigskip
\bigskip

\bibliographystyle{alpha}
\bibliography{gelfand}

\end{document}